\documentclass[preprint]{elsarticle}

\makeatletter
\def\ps@pprintTitle{%
	\let\@oddhead\@empty
	\let\@evenhead\@empty
	\def\@oddfoot{\footnotesize\itshape
		{} \hfill\today}%
	\let\@evenfoot\@oddfoot
}
\makeatother

\usepackage{latexsym}
\usepackage{lineno,hyperref}
\usepackage{indentfirst}
\usepackage{amsxtra}
\usepackage{amssymb}
\usepackage{amsthm}
\usepackage{natbib}
\usepackage{amsmath}
\usepackage{tikz}
\usepackage{amscd}
\setlength{\unitlength}{1mm}
\setlength{\parindent}{2mm} 
\setlength{\parskip}{0mm}
\setlength{\topmargin}{5mm}

\newtheorem{theor}{Theorem}
\newtheorem{prop}[theor]{Proposition}
\newtheorem{cor}[theor]{Corollary}

\newtheorem{lemma}[theor]{Lemma}

\theoremstyle{definition}

\newtheorem{ex}{Example}


\begin{document}

\begin{frontmatter}

\title{On the indecomposable involutive set-theoretic solutions of the Yang-Baxter equation of prime-power size}

\author{M. Castelli\corref{cor1}}
\ead{marco.castelli@unisalento.it}
%
%
\author{G. Pinto\corref{cor2}}
\ead{giuseppina.pinto@unisalento.it}

\author{W. Rump}
\ead{rump@mathematik.uni-stuttgart.de }
\cortext[cor1]{Corresponding author}
%
%
%
%
\begin{abstract}
We develop a method to construct all the indecomposable involutive set-theoretic solutions of the Yang-Baxter equation with a prime-power number of elements and cyclic permutation group. Moreover, we give a complete classification of the indecomposable ones having abelian permutation group and cardinality $pq$ (where $p$ and $q$ are prime numbers not necessarily distinct).
\end{abstract}
\begin{keyword}
\texttt{Cycle set\sep set-theoretic solution\sep Yang-Baxter equation}
\MSC[2010] 16T25\sep 20E22\sep 20F16
\end{keyword}

\end{frontmatter}

\section{Introduction}
A \emph{set-theoretic solution of the Yang-Baxter equation}, or shortly a \textit{solution}, is a pair $(X,r)$ where $X$ is a non-empty set and $r$ an invertible map from $X\times X$ to itself such that 
$$
r_1r_2r_1 = r_2r_1r_2 ,
$$
where $r_1:= r\times id_X$ and $r_2:= id_X\times r$.  Let $\lambda_x:X\to X$ and $\rho_y:X\to X$ be maps such that  
$$
r(x,y) = (\lambda_x(y), \rho_y(x))
$$ 
for all $x,y\in X$. A set-theoretic solution of the Yang-Baxter equation $(X, r)$ is said to be a left [\,right\,] non-degenerate if $\lambda_x\in Sym(X)$ [\,$\rho_x\in Sym(X)$\,] for every $x\in X$, and non-degenerate if it is left and right non-degenerate. \\ 
Drinfeld's paper \cite{drinfeld1992some} stimulated much interest in this subject. In recent years, after the seminal papers by Gateva-Ivanova and Van den Bergh \cite{gateva1998semigroups} and Etingof, Schedler and Soloviev \cite{etingof1998set} the involutive solutions $r$, i.e. $r^2=id_{X\times X}$, have received a lot of attention (see, for example, \cite{bachiller2015family,cacs2,etingof1998set,gateva2004combinatorial,gateva1998semigroups,rump2005decomposition, vendramin2016extensions}).\\
In \cite[Section 2.8]{etingof1998set}, Etingof, Schedler and Soloviev showed that every involutive non-degenerate \textit{decomposable} solution $(X,r)$ can be constructed starting from two smaller solutions $(Y,s)$ and $(Z,t)$, where $Y$ and $Z$ are proper subsets of $X$, $s=r_{|Y\times Y}$ and $t=r_{|Z\times Z}$. Recall that an involutive non-degenerate solution $(X,r)$ is said to be \textit{decomposable} if there exists a partition $\{Y,Z\}$ of $X$ such that $r\vert_{Y\times Y}$ and $r\vert_{Z\times Z}$ are non-degenerate solutions; otherwise, $(X,r)$ is called \textit{indecomposable}. Therefore, a possible strategy to find all the involutive non-degenerate solutions is the following: at first, construct all the indecomposable involutive non-degenerate solutions and then use the ideas developed by Etingof, Schedler and Soloviev to construct the involutive non-degenerate decomposable ones. In 1999 Etingof, Schedler and Soloviev \cite{etingof1998set} found, by computer calculations, all the indecomposable involutive non-degenerate solutions of cardinality at most eight; moreover, they showed that, up to isomorphism, the unique indecomposable involutive non-degenerate solution having a prime number $p$ of elements is given by $(\mathbb{Z}/p\mathbb{Z},u)$ and $u(x,y):=(y-1,x+1)$, for every $x,y\in \mathbb{Z}/p\mathbb{Z}$.\\
In recent years, some authors used the links between involutive solutions and other algebraic structures (see for example \cite{chouraqui2010garside,rump2007braces,gateva1998semigroups,cacs2,dehornoy2015set}) to provide new descriptions of the indecomposable ones: in particular, in 2010 Chouraqui \citep{chouraqui2010garside} give a characterization of indecomposable involutive non-degenerate solutions by Garside monoids, while A. Smoktunowicz and  A. Smoktunowicz \cite{smock} characterized these solutions by left braces. However, these interesting results do not allow to provide easily new families of examples.\\
In that regard, the first two authors together with F. Catino \cite{cacsp2018} developed an extension-theory of indecomposable involutive non-degenerate solutions that allow to construct concretely several new families of indecomposable involutive solutions. The main  tool used is the cycle set, an algebraic structure introduced by the third author \cite{rump2005decomposition}. Recall that a non-empty set $X$ with a binary operation $\cdot$ is a \emph{cycle set} if each left multiplication $\sigma_x:X\longrightarrow X,\; y\longmapsto x\cdot y$ is invertible and 
\begin{equation*}\label{cicloide}
(x\cdot y)\cdot (x\cdot z)= (y\cdot x)\cdot (y\cdot z), 
\end{equation*}
for all $x,y,z\in X$.
Moreover a cycle set $(X,\cdot)$ is \emph{non-degenerate} if the squaring map $\mathfrak{q}:X\longmapsto X,\; x\mapsto x\cdot x$ is bijective.
The third author \cite{rump2005decomposition} proved that if $(X,\cdot)$ is a non-degenerate cycle set, the map $r:X\times X\longmapsto X\times X$ defined by $r(x,y):=(\lambda_x(y), \rho_y(x))$, where $\lambda_x(y):=\sigma_x^{-1}(y)$ and $\rho_y(x):=\lambda_x(y)\cdot x$, is a non-degenerate involutive solution of the Yang-Baxter equation. Conversely, if $(X,r)$ is a non-degenerate involutive solution and $\cdotp$ the binary operation given by $x\cdot y:= \lambda_x^{-1}(y)$ for all $x,y\in X$, then $(X,\cdot)$ is a non-degenerate cycle set. The existence of this bijective correspondence allows us to move the study of involutive non-degenerate solutions to non-degenerate cycle sets, as already made in \cite{bon2019,JeP,cacsp2019,catino2015construction,cacsp2017,cacsp2018,cacsp2018quasi,rump2016quasi,vendramin2016extensions,lebed2017homology}.\\
The goal of the first part of this paper is to provide families of indecomposable involutive non-degenerate solutions by an approach, different from that used in \cite{cacsp2018}, for which the involutive Yang-Baxter Groups play a crucial role. Following \cite{cedo2010involutive}, a group $\mathcal{G}$ is said to be an \textit{involutive Yang-Baxter Group} if there exists an involutive non-degenerate solution $(X,r)$ having (associated) permutation group isomorphic to $\mathcal{G}$. Following the stategy suggested by Ced\'o, Jespers and Del R\'io \cite{cedo2010involutive} to classify finite involutive non-degenerate solutions (not necessarily indecomposable), one can determine the class of the involutive Yang-Baxter Groups and then describe all the involutive non-degenerate solutions having a fixed Yang-Baxter Group $\mathcal{G}$ as associated permutation group. As noted in \cite{cedo2010involutive}, the second step of this approach seems to be very hard. However, this became easier if we restrict to particular classes of indecomposable involutive non-degenerate solutions: in Section $3$ we give a method to construct all the indecomposable involutive non-degenerate solutions having associated permutation group isomorphic to $(\mathbb{Z}/p^k\mathbb{Z},+)$ (where $p$ is an arbitrary prime number and $k$ an arbitrary natural number). The use of this method will be a fundamental step to give a complete classification of indecomposable involutive non-degenerate solutions of size $p^2$ and cyclic permutation group. As we will see, these solutions are multipermutational of level at most $2$. An algorithm useful to construct all the finite involutive non-degenerate solutions (not necessarily indecomposable) of multipermutational level equal to $2$ has been developed recently in \cite{JePiZa}. However, this algorithm is quite technical and does not include any tools to distinguish the isomorphism classes. In the core of Section $4$ we will see that our method, which in full generality requires some technical hypothesis, can be easily used to construct (without computer calculations) all the indecomposable involutive non-degenerate solutions of size $p^2$ having cyclic permutation group and to distinguish the isomorphism classes.\\
In the last part of this paper we will consider indecomposable involutive non-degenerate solutions of size $pq$ (where $p,q$ are prime numbers not necessarily distinct) having abelian associated permutation group.  
In this context, we will show two key-results. In the first one, we will see that every indecomposable involutive non-degenerate solution of size $pq$ (with $p\neq q$) and with cyclic permutation group has multipermutational level equal to $1$. In the second one, we will prove that there is (up to isomorphisms) only one indecomposable involutive non-degenerate solution of size $p^2$ and with abelian non-cyclic permutation group. These results will allow us to give a complete classification of the indecomposable involutive non-degenerate solutions of size $pq$ and abelian permutation group.

\section{Some preliminary results}

A non-empty set $X$ with a binary operation $\cdot$ is a \emph{cycle set} if each left multiplication $\sigma_x:X\longrightarrow X,\; y\longmapsto x\cdot y$ is invertible and 
\begin{equation}\label{cicloide}
(x\cdot y)\cdot (x\cdot z)= (y\cdot x)\cdot (y\cdot z),
\end{equation}
for all $x,y,z\in X$. 
A cycle set $(X,\cdot)$ is \emph{non-degenerate} if the squaring map $\mathfrak{q}:X\longmapsto X,\; x\mapsto x\cdot x$ is bijective, and it is  \emph{square-free} if $\mathfrak{q}=id_X$. It is known that every finite cycle set is non-degenerate (see \cite[Theorem 2]{rump2005decomposition}).
Moreover, the image $\sigma(X)$ of the map $\sigma:X\longrightarrow Sym(X),\; x\mapsto \sigma_x$ can be endowed with an induced binary operation 
$$ 
\sigma_x\cdot\sigma_y:=\sigma_{x\cdot y}
$$
which satisfies (\ref{cicloide}). The third author \cite{rump2005decomposition} showed that $(\sigma(X),\cdot)$ is a (non-degenerate) cycle set if and only if $(X,\cdot)$ is non-degenerate. The cycle set $\sigma(X)$ is called the \emph{retraction} of $(X,\cdot)$.\\
Moreover, the cycle set $(X,\cdot)$ is said to be \emph{irretractable} if $|X|>1$ and $(\sigma(X),\cdot)$ is isomorphic to $(X,\cdot)$, otherwise it is called \emph{retractable}.\\
A non-degenerate cycle set $(X,\cdot)$ is called \emph{multipermutational of level $m$}, if $m$ is the minimal non-negative integer such that $\sigma^m(X)$ has cardinality one, where 
$$
\sigma^0(X):= X \quad \textrm{and}\quad \sigma^n(X):=\sigma(\sigma^{n-1}(X)),\quad \textrm{for}\quad n\geq 1 .
$$
In this case we write $mpl(X)=m$. Obviously a multipermutational cycle set is retractable but the converse is not necessarly true.\\
The permutation group $\mathcal{G}(X)$ of $X$ is the subgroup of $Sym(X)$ generated by the image $\sigma(X)$ of $\sigma$. A subset $Y$ of $X$ is called \emph{$\mathcal{G}(X)$-invariant} when $\sigma_x(Y)\subseteq Y$ for all $x\in X$. A cycle set $X$ is said to be \emph{decomposable} if there exists a partition $\{Y,Z\}$ of $X$ such that $Y$ and $Z$ are $\mathcal{G}(X)$-invariant subsets, otherwise it is called \emph{indecomposable}. Referring to a well known result (see \cite[Prop. 2.12]{etingof1998set}) in terms of  cycle sets, we know that a cycle set $X$ is indecomposable if and only if the permutation group $\mathcal{G}(X)$ acts transitively on $X$.\\
\\
Following Vendramin \cite{vendramin2016extensions}, if $I$ is a cycle set and $S$ a non-empty set, then   $\alpha:I\times I\times S\longrightarrow Sym(S)$, $(i,j,s)\mapsto \alpha_{(i,j)}(s,-)$ is a \emph{dynamical cocycle} if 
\begin{equation}
\alpha_{(i\cdot j,i\cdot k)}(\alpha_{(i,j)}(r,s),\alpha_{(i,k)}(r,t))=\alpha_{(j\cdot i,j\cdot k)}(\alpha_{(j,i)}(s,r),\alpha_{(j,k)}(s,t)),
\end{equation}
for all $i,j,k\in I$, $r,s,t\in S$. Moreover, if $\alpha$ is a dynamical cocycle, then the cycle set $S\times_{\alpha} I:=(S\times I,\cdot)$, where
\begin{equation}
(s,i)\cdot (t,j):=(\alpha_{(i,j)}(s,t),i\cdot j),
\end{equation}
for all $i,j\in I$, $s,t\in S$, is called \emph{dynamical extension} of $I$ by $\alpha$. \\
By \cite[Theorem 3.8]{etingof1998set} (or by \cite[Proposition 2]{cacsp2018} applied with $Y=\sigma(X)$), if $X$ is an indecomposable finite cycle set, then there exist a set $S$ and a dynamical cocycle $\alpha:\sigma(X)\times \sigma(X)\times S\longrightarrow Sym(S)$ such that $X\cong \sigma(X)\times_{\alpha} S$.\\
\\
From now on, we will focus our attention on indecomposable finite cycle sets having abelian permutation group. In the rest of this section we will show that the cycle sets of this type are always non-degenerate and multipermutational: note that by \cite[Theorem 2]{rump2005decomposition} and \cite{cedo2014braces}, these facts are true for every finite cycle sets with abelian permutation group, but, for the indecomposable ones, we are able to exhibit two easier proofs.

\begin{prop}\cite{rumpcourse}\label{dimoabel}
Let $X$ be an indecomposable finite cycle set such that $|X|>1$ and with abelian permutation group $\mathcal{G}(X)$. Then $\sigma(X)$ is an indecomposable cycle set and $X$ is multipermutational.
\end{prop}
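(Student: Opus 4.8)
The plan is to prove the two assertions separately, since the first (indecomposability of $\sigma(X)$) is essentially an exercise in how the permutation group acts, while the second (multipermutationality) will require exploiting the abelian hypothesis more seriously. For the first claim, I would argue that $\sigma(X)$ inherits indecomposability from $X$. By the characterization recalled in the excerpt, a finite cycle set is indecomposable if and only if its permutation group acts transitively; so I would show that $\mathcal{G}(\sigma(X))$ acts transitively on $\sigma(X)$ whenever $\mathcal{G}(X)$ acts transitively on $X$. The natural route is to use that the map $\sigma\colon X\to\sigma(X)$, $x\mapsto\sigma_x$, is a surjective morphism of cycle sets, so it induces a surjective homomorphism $\mathcal{G}(X)\twoheadrightarrow\mathcal{G}(\sigma(X))$ equivariant with $\sigma$. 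Then transitivity of $\mathcal{G}(X)$ on $X$ pushes forward to transitivity of $\mathcal{G}(\sigma(X))$ on the image $\sigma(X)$, giving indecomposability of the retraction.

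For multipermutationality, the plan is to iterate the retraction and show the sizes strictly decrease until we reach a singleton. The key observation is that each retraction $\sigma^{n}(X)$ is again an indecomposable finite cycle set with abelian permutation group (indecomposability by the first part applied repeatedly, and the abelian property descending because $\mathcal{G}(\sigma(X))$ is a homomorphic image of the abelian group $\mathcal{G}(X)$). So it suffices to show that an indecomposable finite cycle set $Y$ with abelian permutation group and $|Y|>1$ is \emph{retractable}, i.e.\ $|\sigma(Y)|<|Y|$; then the chain $|X|>|\sigma(X)|>|\sigma^2(X)|>\cdots$ must terminate at cardinality one. The heart of the matter is therefore: if $\sigma\colon Y\to\sigma(Y)$ is injective (irretractable case), derive a contradiction with the abelian and indecomposability hypotheses. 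Here I would use that $\sigma$ injective means distinct elements have distinct left multiplications, and then try to show the $\mathcal{G}(Y)$-action would be forced to be not just transitive but regular (or otherwise sharply constrained); combined with commutativity, one expects to pin down $\sigma_x$ as, say, a single fixed translation independent of $x$, contradicting $|Y|>1$ unless the action is trivial.

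The step I expect to be the main obstacle is precisely this irretractability-leads-to-contradiction argument for the abelian transitive case. The cleanest approach is likely to exploit that in an abelian permutation group acting transitively, the stabilizer of a point is normal and hence acts trivially, so the action is in fact \emph{regular}; this lets me identify $Y$ with $\mathcal{G}(Y)$ as a $\mathcal{G}(Y)$-set and translate the cycle set identity \eqref{cicloide} into a condition on the abelian group. Writing the left multiplications as $\sigma_x=\tau(x)\,\psi$ for some structural map $\tau$ and comparing with \eqref{cicloide} should force a relation showing that $\sigma$ factors through a proper quotient, contradicting injectivity. I would need to be careful that the regularity argument is valid (transitive abelian actions are indeed regular on finite sets) and that the resulting algebraic identity genuinely collapses the map $\sigma$; verifying this collapse is the delicate computational core, and I would isolate it as the one place where the abelian hypothesis is indispensable.
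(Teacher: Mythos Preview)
Your overall architecture is exactly the paper's: show $\sigma(X)$ is indecomposable with abelian permutation group (both descend along the epimorphism $\mathcal{G}(X)\twoheadrightarrow\mathcal{G}(\sigma(X))$), reduce multipermutationality to the single claim that such a $Y$ with $|Y|>1$ must be retractable, and then derive a contradiction from irretractability. All of that is fine and matches the paper's proof.

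The gap is in your handling of the crucial retractability step. Your two suggested mechanisms don't quite hit the mark. Aiming to show ``$\sigma_x$ is a single fixed translation independent of $x$'' is the wrong target: in the irretractable case $\sigma$ is \emph{injective}, so you cannot hope to prove it is constant; you need a contradiction, not a collapse. And the alternative ``write $\sigma_x=\tau(x)\psi$ and hope \eqref{cicloide} forces $\sigma$ through a proper quotient'' is too vague to constitute an argument; as stated it is a hope, not a proof.

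The paper's trick is more direct and worth knowing. From regularity of the transitive abelian action you get $|\mathcal{G}(Y)|=|Y|$; irretractability ($\sigma$ injective) then forces $\sigma(Y)=\mathcal{G}(Y)$ as sets, so in particular $\mathrm{id}_Y\in\sigma(Y)$: there exists a (unique) $x\in Y$ with $\sigma_x=\mathrm{id}_Y$. Feeding this $x$ into \eqref{cicloide} gives, for every $y,z$,
\[
y\cdot z=(x\cdot y)\cdot(x\cdot z)=(y\cdot x)\cdot(y\cdot z),
\]
so $\sigma_{y\cdot x}$ agrees with $\mathrm{id}_Y$ on the image of $\sigma_y$, hence $\sigma_{y\cdot x}=\mathrm{id}_Y=\sigma_x$. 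Injectivity of $\sigma$ then yields $y\cdot x=x$ for all $y$, i.e.\ $x$ is a global fixed point of $\mathcal{G}(Y)$, contradicting transitivity since $|Y|>1$. This is the missing idea in your sketch: rather than trying to compress $\sigma$, locate the identity inside $\sigma(Y)$ and show the corresponding element is fixed.
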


\begin{proof} 
Since $X$ is finite, it is easy to check that $\sigma(X)$ is an indecomposable cycle set. Moreover, it is easy to show that $\mathcal{G}(\sigma(X))$ is abelian. So it is sufficient to show that $X$ is retractable: indeed, in this case, by induction on $|X|$ we obtain easily that $X$ is multipermutational. So we prove that $|\sigma(X)|<|X|$. 
If $\sigma(X)=X$ then all the elements of $X$ acts differently to each other and, since $\mathcal{G}(X)$ is a transitive abelian group on $X$, we have that $|\mathcal{G}(X)|=|X|$ and therefore there exists a unique $x\in X$ such that $\sigma_x=id_X$. If $y\in X$ then
$$ y\cdotp z=(x\cdotp y)\cdotp(x\cdotp z)=(y\cdotp x)\cdotp (y\cdotp z),$$
for all $z\in X$. Since $\mathcal{G}(X)$ is a transitive abelian group on $X$, it is regular. Then $\sigma_{(y\cdotp x)}=\sigma_x$ for all $y\in X$. Now, since $\sigma(X)=X $ it follows that $y\cdotp x=x$ for all $y\in X$ and hence $\mathcal{G}(X)$ does not act transitively on $X$, in contradiction with the indecomposability of $X$.
\end{proof}

\begin{prop}\cite{rumpcourse}\label{abeldim}
Let $X$ be an indecomposable finite cycle set with abelian permutation group $\mathcal{G}(X)$. Then $X$ is non-degenerate.
\end{prop}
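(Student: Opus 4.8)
The plan is to argue by induction on $|X|$, the base case $|X|=1$ being trivial since the squaring map $\mathfrak{q}$ is then the identity. Assume $|X|>1$ and that every indecomposable finite cycle set with abelian permutation group of smaller cardinality is non-degenerate. By Proposition \ref{dimoabel}, the retraction $\sigma(X)$ is again an indecomposable finite cycle set, its permutation group $\mathcal{G}(\sigma(X))$ is abelian, and $|\sigma(X)|<|X|$; hence the inductive hypothesis applies and $\sigma(X)$ is non-degenerate, i.e. its squaring map is injective. Since $X$ is finite, to prove that $X$ is non-degenerate it suffices to show that $\mathfrak{q}\colon x\mapsto x\cdot x$ is injective.

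First I would record that the canonical surjection $\pi\colon X\to\sigma(X)$, $x\mapsto\sigma_x$, is a homomorphism of cycle sets, because $\pi(x\cdot y)=\sigma_{x\cdot y}=\sigma_x\cdot\sigma_y=\pi(x)\cdot\pi(y)$ by the very definition of the induced operation on $\sigma(X)$. Consequently $\pi$ intertwines the two squaring maps: $\pi(x\cdot x)=\sigma_x\cdot\sigma_x$ for every $x\in X$. Now suppose $x\cdot x=x'\cdot x'$ for some $x,x'\in X$. Applying $\pi$ gives $\sigma_x\cdot\sigma_x=\sigma_{x'}\cdot\sigma_{x'}$, and the injectivity of the squaring map of $\sigma(X)$ forces $\sigma_x=\sigma_{x'}$. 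Then $x\cdot x=\sigma_x(x)$ while $x'\cdot x'=\sigma_{x'}(x')=\sigma_x(x')$, so the assumption $x\cdot x=x'\cdot x'$ yields $\sigma_x(x)=\sigma_x(x')$; since $\sigma_x$ is invertible, $x=x'$. Thus $\mathfrak{q}$ is injective, hence bijective, and $X$ is non-degenerate.

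I do not expect a serious obstacle here: the whole argument rests on the reduction to $\sigma(X)$, and the only substantive input is that the class of indecomposable finite cycle sets with abelian permutation group is closed under retraction and that the retraction is strictly smaller, both of which are supplied by Proposition \ref{dimoabel}. It is precisely the abelian hypothesis, through retractability, that makes the induction go through, so the proof does not rely on the general non-degeneracy of finite cycle sets from \cite[Theorem 2]{rump2005decomposition}; the point where I would be most careful is checking that $\sigma(X)$ genuinely inherits all the hypotheses needed to invoke the inductive step.
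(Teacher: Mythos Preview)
Your proof is correct and follows essentially the same route as the paper's: induction on $|X|$, reduction to the retraction via Proposition~\ref{dimoabel}, and then the observation that $\sigma_x=\sigma_{x'}$ together with $x\cdot x=x'\cdot x'$ forces $x=x'$ by invertibility of $\sigma_x$. The paper's write-up is terser (it jumps straight to $\sigma_{x\cdot x}=\sigma_{y\cdot y}\Rightarrow\sigma_x=\sigma_y$ and then $x\cdot x=y\cdot y=x\cdot y$), but the underlying argument is identical.
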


\begin{proof}
We prove the thesis by induction on $|X|$. If $|X|=2$ the thesis is trivial. Now, let $X$ be a cycle set with $|X|>2$ and $x,y\in X$ such that $x\cdotp x=y\cdotp y$. By the previous proposition $|\sigma(X)|<|X|$ and, by inductive hypothesis, the equality $\sigma_{x\cdotp x}=\sigma_{y\cdotp y}$ implies $\sigma_x=\sigma_y$. Hence 
$$x\cdotp x=y\cdotp y=x\cdotp y$$
 that implies $x=y$. This proves the injectivity of the squaring map $\mathfrak{q}$. By the finiteness of $X$ the thesis follows. 
\end{proof}

\section{Indecomposable cycle sets of order $p^k$ and cyclic permutation group} 

In this section we give a method to construct all the indecomposable cycle sets of cardinality a prime-power and cyclic permutation group. For this purpose, the following lemma will be of crucial importance. 

\begin{lemma}\label{retrgen}
Let $X$ be an indecomposable finite cycle set having cyclic permutation group $\mathcal{G}(X)$ generated by an element $\alpha\in Sym(X)$.\\
Then $\sigma_{\alpha^{|\sigma(X)|}(x)}=\sigma_x$ for every $x\in X$. In particular $|\sigma(X)|$ is the minimal integer such that the previous equality holds.
\end{lemma}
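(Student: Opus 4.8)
The plan is to realize the retraction map $\sigma\colon X\to\sigma(X)$ as an equivariant surjection that induces a surjective homomorphism of permutation groups, and then to read off both the equality and its minimality from the fact that $\mathcal{G}(\sigma(X))$ acts regularly on $\sigma(X)$.

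First I would record that $\sigma$ intertwines the two left-multiplication actions: for all $x,y\in X$ one has $\sigma(\sigma_x(y))=\sigma(x\cdot y)=\sigma_x\cdot\sigma_y=\sigma_{\sigma_x}(\sigma_y)$, where $\sigma_{\sigma_x}$ denotes the left multiplication by $\sigma_x$ in the cycle set $\sigma(X)$. In particular each $\sigma_x$ sends fibers of $\sigma$ into fibers of $\sigma$, and since the generators $\sigma_x$ (hence all of $\mathcal{G}(X)$) are bijections of the finite set $X$, they permute the fibers of $\sigma$. Consequently every $g\in\mathcal{G}(X)$ descends to a well-defined permutation $\pi(g)$ of $\sigma(X)$ characterized by $\sigma\circ g=\pi(g)\circ\sigma$, the assignment $g\mapsto\pi(g)$ is a group homomorphism $\pi\colon\mathcal{G}(X)\to Sym(\sigma(X))$, and $\pi(\sigma_x)=\sigma_{\sigma_x}$ by the intertwining identity above. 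As the $\sigma_{\sigma_x}$ generate $\mathcal{G}(\sigma(X))$, the map $\pi$ is a surjection onto $\mathcal{G}(\sigma(X))$.

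Next I would pin down $|\mathcal{G}(\sigma(X))|$. Since $X$ is indecomposable, $\mathcal{G}(X)$ acts transitively on $X$; applying $\pi$ together with the surjectivity of $\sigma$ shows that $\mathcal{G}(\sigma(X))$ acts transitively on $\sigma(X)$, so $\sigma(X)$ is indecomposable. Moreover $\mathcal{G}(\sigma(X))$ is a homomorphic image of the cyclic group $\mathcal{G}(X)=\langle\alpha\rangle$, hence it is cyclic and generated by $\pi(\alpha)$. A transitive abelian permutation group is regular (all point-stabilizers coincide and therefore are trivial), so $|\mathcal{G}(\sigma(X))|=|\sigma(X)|$ and $\pi(\alpha)$ has order exactly $|\sigma(X)|$.

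Finally I would translate back to $X$. For any integer $d$, the identity $\sigma_{\alpha^d(x)}=\sigma_x$ for every $x\in X$ is exactly the statement $\sigma\circ\alpha^d=\sigma$, which by surjectivity of $\sigma$ is equivalent to $\pi(\alpha)^d=\pi(\alpha^d)=id$, i.e.\ to $|\sigma(X)|\mid d$. Choosing $d=|\sigma(X)|$ yields $\sigma_{\alpha^{|\sigma(X)|}(x)}=\sigma_x$ for all $x$, while the same equivalence shows no smaller positive $d$ can work, which gives the minimality. The main obstacle is the first paragraph: one must justify carefully that the fiber partition of $\sigma$ is $\mathcal{G}(X)$-invariant and that $g\mapsto\pi(g)$ is a genuine (well-defined) group homomorphism onto $\mathcal{G}(\sigma(X))$; once this descent and the regularity of $\mathcal{G}(\sigma(X))$ are in hand, the remaining deduction is purely group-theoretic and short.
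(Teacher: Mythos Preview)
Your argument is correct. Your route differs from the paper's: the paper works elementwise, writing $\alpha=\sigma_{y_1}\cdots\sigma_{y_s}$ and using the induced operation on $\sigma(X)$ to show by induction that if $\sigma_{\alpha^k(x)}=\sigma_x$ then $\sigma_{\alpha^{k+j}(x)}=\sigma_{\alpha^j(x)}$ for all $j$, so the minimal period $\bar n$ is independent of $x$ and $\sigma(X)=\{\sigma_x,\sigma_{\alpha(x)},\dots,\sigma_{\alpha^{\bar n-1}(x)}\}$, whence $\bar n=|\sigma(X)|$. You instead package the same content structurally: the equivariance $\sigma\circ\sigma_x=\sigma_{\sigma_x}\circ\sigma$ yields a surjective homomorphism $\pi\colon\mathcal{G}(X)\twoheadrightarrow\mathcal{G}(\sigma(X))$, and since a transitive abelian group is regular you get $\mathrm{ord}(\pi(\alpha))=|\sigma(X)|$, which immediately converts $\sigma_{\alpha^d(x)}=\sigma_x$ for all $x$ into the divisibility condition $|\sigma(X)|\mid d$. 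Your approach is cleaner and yields the ``if and only if'' divisibility statement directly (which is exactly what later results such as Lemma~\ref{lemma1} use), while the paper's computation makes the orbit description $\sigma(X)=\{\sigma_{\alpha^i(x)}:0\le i<|\sigma(X)|\}$ explicit and, as a small bonus, shows that the minimal period is already $|\sigma(X)|$ for each individual $x$, not only for all $x$ simultaneously.
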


\begin{proof}
Let $x\in X$ and $k\in \mathbb{N}$ such that $\sigma_{\alpha^{k}(x)}=\sigma_x$. Now we show that $ \sigma_{\alpha^{k+j}(x)}=\sigma_{\alpha^j(x)}$ for every $j\in \mathbb{N}$.
Let $y_1, \ldots ,y_s\in X$ such that $\alpha=\sigma_{y_1}\cdots \sigma_{y_s}$. Then we have that
\begin{align*}
\sigma_{\alpha^{k+1}(x)}&=\sigma_{y_1\cdot(y_2\cdot(\cdots (y_s\cdot \alpha^k(x))\cdots)))}\\
&= \sigma_{y_1}\cdot (\cdots (\sigma_{y_s}\cdot \sigma_{\alpha^k(x)})\\
&= \sigma_{y_1}\cdot (\cdots (\sigma_{y_s}\cdot \sigma_{x})\\
&=\sigma_{\alpha(x)}
\end{align*}
hence the thesis follows for $j=1$. Now, 
\begin{align*}
\sigma_{\alpha^{k+j}(x)}&=\sigma_{y_1\cdot(y_2\cdot(\cdots (y_s\cdot \alpha^{k+j-1}(x))\cdots)))}\\
&= \sigma_{y_1}\cdot (\cdots (\sigma_{y_s}\cdot \sigma_{\alpha^{k+j-1}(x)})\\
&= \sigma_{y_1}\cdot (\cdots (\sigma_{y_s}\cdot \sigma_{\alpha^{j-1}(x)})\\
&=\sigma_{\alpha^j(x)}
\end{align*}
where in the third equality we used the inductive hypothesis. This implies that $\bar{n}:=min\{ n\mid n\in \mathbb{N}\; \sigma_{\alpha^n(x)}=\sigma_x  \}$ does not depend on the choice of the element $x$ and that $\sigma(X)=\{\sigma_x,\sigma_{\alpha(x)},...,\sigma_{\alpha^{\bar{n}-1}(x)}\}$, hence the thesis.
\end{proof}

An other simple but useful result for our scope is the following lemma, in which we prove that if $X$ is an indecomposable finite cycle set of cardinality a prime-power and cyclic permutation group, then $\mathcal{G}(X)$ is generated by one left multiplication. Since indecomposable cycle sets of prime size have been completely described in \cite{etingof1998set}, from now on, if not specified, when a cycle set has size $p^k$ for some prime number $p$, then $k$ will be a natural number greater than $1$.

\begin{lemma}\label{ungen}
Let $X$ be an indecomposable cycle set of order $p^k$ and with cyclic permutation group $\mathcal{G}(X):=<\psi>$, where $p$ is a prime integer and $k$ a natural number. Then there exists $x\in X$ such that $<\sigma_x>=\mathcal{G}(X)$.
\end{lemma}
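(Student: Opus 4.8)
The plan is to first pin down the order of $\mathcal{G}(X)$ and then exploit the fact that a finite cyclic $p$-group has a unique maximal proper subgroup.

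First I would observe that, since $X$ is indecomposable, the group $\mathcal{G}(X)$ acts transitively on $X$, by the characterization of indecomposability recalled in Section 2. As $\mathcal{G}(X)=\langle\psi\rangle$ is cyclic it is in particular abelian, and a transitive abelian permutation group is regular --- this is exactly the fact already invoked in the proof of Proposition \ref{dimoabel}. Hence $|\mathcal{G}(X)|=|X|=p^k$. (Equivalently, one may note that the orbits of $\langle\psi\rangle$ are the cycles of $\psi$, so transitivity forces $\psi$ to be a single cycle of length $|X|$, i.e. a $p^k$-cycle of order $p^k$.) In either case $\mathcal{G}(X)\cong(\mathbb{Z}/p^k\mathbb{Z},+)$ is a cyclic $p$-group.

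Next I would recall the elementary structural fact that a finite cyclic $p$-group of order $p^k$ has a unique maximal proper subgroup $M$, namely $M=\langle\psi^p\rangle$, which has order $p^{k-1}$ and index $p$; moreover an element $g\in\mathcal{G}(X)$ generates the whole group if and only if $g\notin M$. This is the only place where the prime-power hypothesis on $|X|$ genuinely enters.

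Finally, by definition $\mathcal{G}(X)$ is generated by the set $\sigma(X)=\{\sigma_x : x\in X\}$. If every $\sigma_x$ lay in $M$, then $\langle\sigma(X)\rangle\subseteq M\subsetneq\mathcal{G}(X)$, contradicting $\langle\sigma(X)\rangle=\mathcal{G}(X)$. Therefore some $\sigma_x\notin M$, and for such an $x$ we conclude $\langle\sigma_x\rangle=\mathcal{G}(X)$, as required. I do not expect any serious obstacle here: the whole content of the lemma is the identity $|\mathcal{G}(X)|=p^k$, after which the statement reduces to the standard behaviour of cyclic $p$-groups. For a general cyclic permutation group the conclusion would fail, so the appeal to $|X|=p^k$ (via regularity) is essential.
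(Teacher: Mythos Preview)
Your proposal is correct and follows essentially the same argument as the paper: both deduce $|\mathcal{G}(X)|=p^k$ from transitivity (you are slightly more explicit about the ``transitive abelian $\Rightarrow$ regular'' step), and then observe that if every $\sigma_x$ lay in the unique maximal subgroup $\langle\psi^p\rangle$ the generated group would be too small. The paper phrases this last step via exponents, writing $\sigma_x=\psi^{a_x}$ and arguing that some $a_x$ must be coprime to $p$, which is just another way of saying $\sigma_x\notin\langle\psi^p\rangle$.
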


\begin{proof}
By \cite[Prop. 2.12]{etingof1998set}, $\mathcal{G}(X)$ acts transitively on $X$, therefore we have that $|\mathcal{G}(X)|=|X|=p^k$. Moreover, for every $x\in X$ there exists $a_x\in \mathbb{N}$ such that $\sigma_x=\psi^{a_x}$, hence it is sufficient to show that there exists $x\in X$ such that $(a_x,p)=1$.\\
If we suppose $p\mid a_x$ for every $x\in X$, then $\mathcal{G}(X)$ is contained in $<\psi^p>$ and hence $|\mathcal{G}(X)|<p^k$, a contradiction.
\end{proof}

Referring to Lemma \ref{ungen}, if $X$ is an indecomposable cycle set having $p^k$ elements and cyclic permutation group, hereinafter, without loss of generality, we can suppose that $X:=\{0,...,p^k-1\}$,  $\mathcal{G}(X)=<t_1>$ and $\sigma_0=t_1$, where $t_1$ is the permutation given by $t_1:=(0,...,p^k-1)$.\\
In order to show the main results of this section, we exhibit some other preliminary lemmas.

\begin{lemma}\label{congru}
Let $X:=\{0,...,p^k-1\}$ be an indecomposable cycle set with cyclic permutation group $\mathcal{G}(X):=<t_1>$, where $p$ is a prime integer and $k$ a natural number, $j_0,...,j_{p^k-1}\in \mathbb{N}$ such that $\sigma_{i}=t_{1}^{j_i}$ for every $i=0,...,p^k-1$. Then $j_i\equiv j_{i+|\sigma^s(X)|}\;(mod\; |\sigma^{s-1}(X)|)$ for every $i\in \{0,...,p^k-1\}$ and $s\in \mathbb{N}$.
\end{lemma}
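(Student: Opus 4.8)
The plan is to induct on $|X|$, treating the case $s=1$ as a base step that follows at once from Lemma~\ref{retrgen}, and then pushing the statement down to the retraction $\overline{X}:=\sigma(X)$, which I will show satisfies exactly the same structural hypotheses as $X$. Throughout I identify indices cyclically, so that $t_1(i)=i+1$ on $\mathbb{Z}/p^k\mathbb{Z}$ and $j_i$ is read modulo $p^k$.

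First I would dispose of $s=1$. Since $t_1$ has order $p^k=|\sigma^0(X)|$, the equality $\sigma_{\alpha^{|\sigma(X)|}(x)}=\sigma_x$ of Lemma~\ref{retrgen} reads $\sigma_{i+|\sigma(X)|}=\sigma_i$, that is $j_i\equiv j_{i+|\sigma(X)|}\pmod{|\sigma^0(X)|}$, which is the assertion for $s=1$. The same lemma tells me that $|\sigma(X)|$ is the \emph{minimal} period of the sequence $(\sigma_i)_{i\in\mathbb{Z}/p^k\mathbb{Z}}$; since $p^k$ is also a period, $|\sigma(X)|$ divides $p^k$ and is therefore a power of $p$.

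Next I would analyse $\overline{X}$. Its operation gives $\sigma_x\cdot\sigma_y=\sigma_{x\cdot y}=\sigma_{y+j_x}$, so left multiplication by $\sigma_x$ is the shift $\sigma_y\mapsto\sigma_{y+j_x}$. Identifying $\overline{X}$ with $\{0,\dots,|\sigma(X)|-1\}$ via $\sigma_y\leftrightarrow y\bmod|\sigma(X)|$, every left multiplication is thus a power of the full cycle $t_2:=(0,\dots,|\sigma(X)|-1)$. These shifts commute, so $\mathcal{G}(\overline{X})$ is an abelian, hence cyclic, subgroup of the shift group; and as $\overline{X}$ is indecomposable by Proposition~\ref{dimoabel}, its permutation group is transitive, forcing $\mathcal{G}(\overline{X})=\langle t_2\rangle$. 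Consequently $\overline{X}$ is an indecomposable cycle set of prime-power order $|\sigma(X)|<|X|$ whose cyclic permutation group is generated by a full-length cycle, i.e.\ it satisfies the hypotheses of the statement. Writing $\sigma_x=t_2^{\bar{\jmath}_x}$ in $\overline{X}$, the computation above shows $\bar{\jmath}_x\equiv j_x\pmod{|\sigma(X)|}$.

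Finally, for $s\ge 2$ I would invoke the inductive hypothesis for $\overline{X}$ at level $s-1$. Using $\sigma^t(\overline{X})=\sigma^{t+1}(X)$, this yields $\bar{\jmath}_\ell\equiv\bar{\jmath}_{\ell+|\sigma^s(X)|}\pmod{|\sigma^{s-1}(X)|}$ for every $\ell$. Because $|\sigma^{s-1}(X)|$ divides $|\sigma(X)|$ (the retraction sizes of $\overline{X}$ are powers of $p$ dividing $|\sigma(X)|$) and $\bar{\jmath}_\ell\equiv j_\ell\pmod{|\sigma(X)|}$, I may reduce the congruence modulo $|\sigma^{s-1}(X)|$ and replace each $\bar{\jmath}$ by the corresponding $j$; periodicity $j_{i+|\sigma(X)|}\equiv j_i\pmod{p^k}$ then spreads the resulting congruence from $\{0,\dots,|\sigma(X)|-1\}$ to all $i\in\{0,\dots,p^k-1\}$, giving $j_i\equiv j_{i+|\sigma^s(X)|}\pmod{|\sigma^{s-1}(X)|}$. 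The routine but genuine obstacle is precisely this transfer: one must verify that the retraction inherits a cyclic permutation group generated by a single full cycle (so the inductive hypothesis applies) and keep careful track of which modulus and which shift index belong to which level, since the moduli $|\sigma^{s-1}(X)|$ and shifts $|\sigma^s(X)|$ migrate by one step when passing between $X$ and $\overline{X}$.
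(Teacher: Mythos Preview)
Your argument is correct and follows essentially the same route as the paper's proof: handle $s=1$ via Lemma~\ref{retrgen}, then pass to the retraction $\sigma(X)$ and apply induction (you induct on $|X|$, the paper on $mpl(X)$, which are equivalent here since Proposition~\ref{dimoabel} guarantees $|\sigma(X)|<|X|$). Your treatment is in fact slightly more explicit than the paper's in verifying that $\sigma(X)$ again has cyclic permutation group generated by a full cycle, and in justifying why $|\sigma^{s-1}(X)|$ divides $|\sigma(X)|$; the paper simply cites \cite[Lemma~1]{cacsp2018} for the latter.
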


\begin{proof}
Since by Proposition \ref{dimoabel} $X$ is multipermutational, we prove the thesis by induction on $mpl(X)$. If $mpl(X)=1$ the thesis is trivial. Now, let $\bar{t}_1$ be the element of $Sym(\sigma(X))$ given by $\bar{t}_1=(\sigma_{x_0}\;...\;\sigma_{x_{|\sigma(X)|-1}}) $. Then it follows that $\mathcal{G}(\sigma(X))=<\bar{t}_1>$ and $\sigma_{\sigma_{x_i}}=\bar{t}_1^{j'_i}$ for every $i=0,...,|\sigma(X)|-1$, where $j'_i$ is the rest of $j_i$ by the division for $|\sigma(X)|$. By inductive hypothesis we have that $j'_i\equiv j'_{i+|\sigma^{s+1}(X)|}\;(mod\; |\sigma^{s}(X)|)$ for every $s\in \mathbb{N}$ and, since by \cite[Lemma 1]{cacsp2018} $|\sigma^s(X)|$ divides $|\sigma(X)|$ for every $s\in \mathbb{N}$, it holds that $j_i\equiv j_{i+|\sigma^{s+1}(X)|}\;(mod\; |\sigma^{s}(X)|)$ for every $i\in \{0,...,p^k-1\}$ and $s\in \mathbb{N}$.\\
It remains to prove that $j_i\equiv j_{i+|\sigma(X)|}\;(mod\; |X|)$ for every $i\in \{0,...,p^k-1\}$, but this fact holds by Lemma \ref{retrgen}.
\end{proof}

\begin{cor}\label{consegen}
Let $X:=\{0,...,p^k-1\}$ be an indecomposable cycle set with cyclic permutation group $\mathcal{G}(X):=<t_1>$, where $p$ is a prime integer and $k$ a natural number, $j_0,...,j_{p^k-1}\in \mathbb{N}$ such that $\sigma_{i}=t_{1}^{j_i}$ for every $i=0,...,p^k-1$. Then $<\sigma_x>=\mathcal{G}(X)$ for every $x\in X$.
\end{cor}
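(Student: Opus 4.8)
The goal is to show that every left multiplication $\sigma_x$ generates the full cyclic permutation group $\mathcal{G}(X) = \langle t_1 \rangle$ of order $p^k$. By Lemma~\ref{ungen} we already know at least one such generator exists, and we have the congruence relations from Lemma~\ref{congru} relating the exponents $j_i$ defined by $\sigma_i = t_1^{j_i}$. Since $\mathcal{G}(X)$ is cyclic of order $p^k$, an element $\sigma_x = t_1^{j_x}$ generates the whole group precisely when $\gcd(j_x, p) = 1$, i.e. when $p \nmid j_x$. So the plan is to prove that $p \nmid j_i$ for \emph{every} $i \in \{0,\dots,p^k-1\}$.

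First I would invoke Lemma~\ref{ungen} to fix some $x_0$ with $\langle \sigma_{x_0}\rangle = \mathcal{G}(X)$, equivalently $p \nmid j_{x_0}$. The task is then to propagate this coprimality to all indices. The natural tool is the congruence from Lemma~\ref{congru}: taking $s=1$ there gives $j_i \equiv j_{i + |\sigma(X)|} \pmod{|\sigma^0(X)|} = \pmod{|X|}$, and more usefully, reducing modulo $p$ the smallest-scale congruence should tie together the residues of the $j_i$ modulo $p$. Specifically, I expect that Lemma~\ref{congru} with an appropriate choice of $s$ forces all the $j_i$ to share the same residue class modulo $p$: the congruence $j_i \equiv j_{i+|\sigma^s(X)|} \pmod{|\sigma^{s-1}(X)|}$ taken at the level where $|\sigma^{s-1}(X)| = p$ (the last nontrivial retraction, using that all the cardinalities $|\sigma^s(X)|$ are powers of $p$ dividing one another) should shift the index by $|\sigma^s(X)| = 1$, i.e. compare $j_i$ with $j_{i+1}$ modulo $p$. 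Chaining these yields $j_0 \equiv j_1 \equiv \cdots \equiv j_{p^k-1} \pmod p$.

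Once all the $j_i$ are congruent modulo $p$, the conclusion is immediate: since $p \nmid j_{x_0}$ for the distinguished index, the common residue modulo $p$ is nonzero, hence $p \nmid j_i$ for all $i$, and therefore $\gcd(j_i, p) = 1$ and $\langle \sigma_i \rangle = \langle t_1 \rangle = \mathcal{G}(X)$ for every $i \in X$.

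The main obstacle I anticipate is pinning down exactly which instance of Lemma~\ref{congru} produces the step-by-step comparison $j_i \equiv j_{i+1} \pmod p$. This requires carefully tracking the chain of cardinalities $|X| = |\sigma^0(X)| > |\sigma^1(X)| > \cdots$, each a power of $p$ (by Proposition~\ref{dimoabel} together with \cite[Lemma~1]{cacsp2018}), and identifying the stage $s$ at which the modulus $|\sigma^{s-1}(X)|$ equals $p$ while the index shift $|\sigma^s(X)|$ equals $1$ (at the very top of the tower, where $\sigma^s(X)$ is a singleton). If the shift does not land exactly on consecutive integers, one must instead argue that the various shifts $|\sigma^s(X)|$ are coprime to $p$ only in combination, or use a $\gcd$ argument over several congruences to conclude that the residues modulo $p$ are all equal; verifying that these shifts generate all of $\mathbb{Z}/p^k\mathbb{Z}$ additively modulo $p$, or suffice to link every index to $x_0$, is the delicate bookkeeping step.
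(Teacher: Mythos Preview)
Your approach is correct and is exactly the paper's argument. The ``obstacle'' you anticipate is not one: take $s=mpl(X)$, so that $|\sigma^s(X)|=1$ (the shift in Lemma~\ref{congru} is then literally by $1$, giving $j_i\equiv j_{i+1}\pmod{|\sigma^{s-1}(X)|}$), and note that you do not need $|\sigma^{s-1}(X)|=p$ exactly---since $|\sigma^{s-1}(X)|>1$ is a power of $p$, it is divisible by $p$, and that is all you need to chain the congruences modulo $p$ and conclude (using the normalization $j_0=1$, which is the standing assumption after Lemma~\ref{ungen}).
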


\begin{proof}
By Proposition \ref{dimoabel}, $X$ is a multipermutational cycle set. Let $s$ be its multipermutational level. By the previous proposition $j_{i+1}\equiv j_i\;(mod\; \sigma^{s-1}(X))$ for every $i\in \{0,...,p^k-1\}$. Since $j_0=1$ and $p||\sigma^{s-1}(X)|$, it follows that $j_i\equiv 1\;(mod\;p)$ for every $i\in \{0,...,p^k-1\}$, hence the thesis.
\end{proof}

\begin{lemma}\label{scomp}
Let $m\in \mathbb{N}$, $p$ a prime number, $j_0,...,j_{m+1}\in \mathbb{N}$ such that $j_0=0$ and $j_i<j_{i+1}$ for every $i\in \{0,...,m\}$ and $n\in \{0,...,p^{j_{m+1}}-1\}$. Then there exists a unique $(a_0,...,a_m)\in \mathbb{N}^{m+1}$ such that $n=a_0+a_1p^{j_1}+...+a_{m}p^{j_m}$, where $a_i\in \{0,...,\frac{p^{j_{i+1}}}{p^{j_i}}-1\}$ for every $i\in \{0,...,m\}$.
\end{lemma}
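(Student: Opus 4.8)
I would read the statement as the existence and uniqueness of a \emph{mixed-radix expansion} of $n$ with respect to the strictly increasing exponents $j_0=0<j_1<\cdots<j_{m+1}$, where the $i$-th digit $a_i$ ranges over a block of size $\frac{p^{j_{i+1}}}{p^{j_i}}=p^{\,j_{i+1}-j_i}$. The plan is to prove it by induction on $m$, the crucial quantitative input being a telescoping identity: the largest value representable with admissible digits is
\[
\sum_{i=0}^{m}\Bigl(p^{\,j_{i+1}-j_i}-1\Bigr)p^{j_i}=\sum_{i=0}^{m}\bigl(p^{j_{i+1}}-p^{j_i}\bigr)=p^{j_{m+1}}-p^{j_0}=p^{j_{m+1}}-1,
\]
since $j_0=0$. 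Hence the number of admissible tuples, namely $\prod_{i=0}^{m}p^{\,j_{i+1}-j_i}=p^{j_{m+1}}$, coincides with $|\{0,\dots,p^{j_{m+1}}-1\}|$, and the candidate map $(a_0,\dots,a_m)\mapsto\sum_{i=0}^m a_i p^{j_i}$ lands exactly in the prescribed range.

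For the base case $m=0$ one has $j_0=0$ and $n\in\{0,\dots,p^{j_1}-1\}$, and the only admissible choice is $a_0=n$. For the inductive step I would extract the top digit first: given $n\in\{0,\dots,p^{j_{m+1}}-1\}$, set $a_m:=\lfloor n/p^{j_m}\rfloor$. The bound $n<p^{j_{m+1}}$ gives $a_m\le p^{\,j_{m+1}-j_m}-1$, so $a_m$ lies in its block, while $n':=n-a_m p^{j_m}$ satisfies $0\le n'<p^{j_m}$. Applying the inductive hypothesis to $n'$ and to the truncated sequence $j_0,\dots,j_m$ (still starting at $0$ and strictly increasing, with $j_m$ now in the role of the top exponent) produces a unique admissible $(a_0,\dots,a_{m-1})$ with $n'=a_0+a_1p^{j_1}+\cdots+a_{m-1}p^{j_{m-1}}$, and adding $a_m p^{j_m}$ yields the desired expansion of $n$.

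Uniqueness follows by the same peeling: in any admissible expansion the lower partial sum $a_0+\cdots+a_{m-1}p^{j_{m-1}}$ is at most $p^{j_m}-1$ by the telescoping bound applied up to index $m-1$, so it is strictly smaller than $p^{j_m}$ and therefore forces $a_m=\lfloor n/p^{j_m}\rfloor$; the remaining digits are then pinned down by the inductive uniqueness applied to $n'$. (Alternatively, existence together with the equality of the two cardinalities $p^{j_{m+1}}$ makes injectivity, and hence uniqueness, automatic.) I expect no genuine obstacle here: the only point demanding care is the exponent bookkeeping, i.e. checking that each extracted digit falls into its block $\{0,\dots,p^{\,j_{i+1}-j_i}-1\}$ and that the telescoping identity $\sum_{i=0}^m(p^{j_{i+1}}-p^{j_i})=p^{j_{m+1}}-1$ is applied with the correct index range at each level of the induction. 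Once this identity is in hand, both existence and uniqueness are essentially forced.
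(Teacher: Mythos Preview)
Your proof is correct and follows essentially the same inductive strategy as the paper: induct on $m$, peel off the top digit $a_m$ determined by division by $p^{j_m}$, and apply the inductive hypothesis to the remainder. The paper handles the top digit via a case split ($n<p^{j_m}$ versus $n\ge p^{j_m}$) rather than your floor function, and it does not make the telescoping identity or the cardinality-counting alternative explicit, but these are presentational differences rather than a different argument.
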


\begin{proof}
We prove the thesis by induction on $m$. If $m=1$ the thesis is trivial. Therefore, suppose that for every $n\in \{0,...,p^{j_{m}}-1\}$ there exists a unique $(a_0,...,a_{m-1})\in \mathbb{N}^{m}$ such that $n=a_0+a_1p^{j_1}+...+a_{m-1}p^{j_{m-1}}$, where $a_i\in \{0,...,\frac{p^{j_{i+1}}}{p^{j_i}}-1\}$ for every $i\in \{0,...,m-1\}$.\\
Now, let $n\in \{0,...,p^{j_{m+1}}-1\} $ and consider two cases. If $n<p^{j_m}$ then, by inductive hypothesis, there exists a unique $(a_0,...,a_{m-1})\in \mathbb{N}^{m}$ such that $n=a_0+a_1p^{j_1}+...+a_{m-1}p^{j_{m-1}}$, hence  $(a_0,...,a_{m-1},0)$ is an element of $\mathbb{N}^{m+1}$ such that $n=a_0+a_1p^{j_1}+...+a_{m-1}p^{j_{m-1}}+0p^{j_m}$. The unicity of $(a_0,...,a_{m-1},0)$ follows from the fact that, if $a_m>0$, then $n>p^{j_m}-1$.\\
If $n>p^{j_m}-1$ then there exist  $a\in \{0,...,\frac{p^{j_{m+1}}}{p^{j_m}}-1\}$ and $b\in \{0,...,p^{j_{m}}-1 \}$ such that $n=ap^{j_m}+b$. Moreover, such pair $(a,b)$ is unique. Then, by inductive hypothesis applied on $b$, we obtain that there exist a unique element $(b_0,...,b_m)\in \mathbb{N}^{m+1}$ such that $n=b_0+b_1p^{j_1}+...+b_{m}p^{j_m}$.\\
\end{proof}

Now we are able to exhibit the main results. At first, we show a method to construct indecomposable cycle sets of prime-power size and cyclic permutation group. Since the multipermutational cycle sets of level $1$ are in some sense "trivial", in this context we will consider cycle sets of level at least $2$. 

\begin{theor}\label{costruz2}
Let $X:=\{0,...,p^k-1\}$, $n\in \mathbb{N}\setminus\{1\}$, $j_0,...,j_{n}\in \mathbb{N}\cup\{0\}$ such that $j_{n}=0$, $j_0=k$, $j_i<j_{i-1}$ for every $i\in \{1,...,n\}$ and $\{f_i\}_{i\in \{1,...,n-1\}}$ a set of functions such that
$$f_{i}:\mathbb{Z}/p^{j_i}\mathbb{Z}\longrightarrow \{0,...,p^{j_{i-1}}/p^{j_i}-1\}$$
$f_{i}(0)=0$ for every $i\in \{1,...,n-1 \}$ and the function
$$\varphi_{i}:\{0,...,p^{j_{i}}-1\}\longrightarrow \{1,...,p^{j_{i-1}}-1\}$$
$$l\mapsto 1+p^{j_{n-1}}f_{n-1}(l)+...+p^{j_{i}}f_{i}(l)$$
is injective for every $i\in \{1,...,n-1\}$. Moreover, set $\psi:=(0\;...\;p^k-1)$ and
 \begin{equation}
\sigma_{i}:=\psi^{1+p^{j_{n-1}}f_{n-1}(i)+...+p^{j_1}f_{1}(i)}
\end{equation}
for every $i\in X$. Define 
$$K_{j,i}:=j+1+p^{n-1}f_{n-1}(i)+...+p^{j_2}f_2(i)$$
and
$$Q_{j,i}:= p^{j_{n-1}}f_{n-1}(i)+...+p^{j_1}f_1(i)+p^{j_{n-1}}f_{n-1}(K_{j,i})+...+p^{j_1}f_1(K_{j,i})$$
for every $i,j\in X$ and suppose that $Q_{i,j}\equiv Q_{j,i}\;(mod\;p^k)$ for every $i,j\in \{0,...,p^k-1\}$.\\
Then $X$ is an indecomposable cycle set of level $n$ and cyclic permutation group $<\psi>$ such that $|\sigma^i(X)|=p^{j_i}$ for every $i\in\{0,...,n\}$.

\end{theor}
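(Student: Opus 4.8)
The plan is to read off the binary operation explicitly and then recognise the compatibility hypothesis as a literal restatement of the cycle set law. Since $\psi$ acts on $X=\{0,\dots,p^k-1\}=\mathbb{Z}/p^k\mathbb{Z}$ as $+1$, the left multiplications are $\sigma_x(y)=\psi^{e(x)}(y)=y+e(x)\pmod{p^k}$, where I abbreviate $e(x):=1+\sum_{l=1}^{n-1}p^{j_l}f_l(x)$ and $g(x):=e(x)-1=\sum_{l=1}^{n-1}p^{j_l}f_l(x)$. Each $\sigma_x$ is then a power of the permutation $\psi$, hence invertible, so the only nontrivial axiom to verify is the cycle set law. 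Writing $x\cdot y=y+e(x)$ and expanding both sides, the identity $(x\cdot y)\cdot(x\cdot z)=(y\cdot x)\cdot(y\cdot z)$ collapses (the common $z$-summand and the two constant contributions cancel) to
\[
g(x)+g\bigl(y+e(x)\bigr)\equiv g(y)+g\bigl(x+e(y)\bigr)\pmod{p^k}.
\]
First I would record the \emph{reduction principle}: because $f_l$ factors through $\mathbb{Z}/p^{j_l}\mathbb{Z}$ and $j_l\le j_1$ for every $l\in\{1,\dots,n-1\}$, the quantity $g(m)$ (and hence $e(m)$) depends only on $m\bmod p^{j_1}$. Since $K_{j,i}=j+e(i)-p^{j_1}f_1(i)\equiv j+e(i)\pmod{p^{j_1}}$, this gives $g(K_{j,i})=g\bigl(j+e(i)\bigr)$, and therefore $Q_{j,i}=g(i)+g(K_{j,i})=g(i)+g\bigl(j+e(i)\bigr)$. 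Thus the displayed identity is exactly $Q_{y,x}\equiv Q_{x,y}\pmod{p^k}$, which is the standing hypothesis; this proves that $(X,\cdot)$ is a cycle set.

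Next I would pin down the permutation group and the size of the first retraction. From $f_l(0)=0$ we get $e(0)=1$, so $\sigma_0=\psi$; as every $\sigma_x$ lies in $\langle\psi\rangle$, we obtain $\mathcal{G}(X)=\langle\psi\rangle$, which acts transitively because $\psi$ is a $p^k$-cycle. By the cycle-set form of \cite[Prop. 2.12]{etingof1998set}, $X$ is indecomposable. For $|\sigma(X)|$ I would use that $\sigma_x=\sigma_{x'}$ iff $e(x)\equiv e(x')\pmod{p^k}$; by the reduction principle $e$ depends only on the residue modulo $p^{j_1}$, and its restriction to $\{0,\dots,p^{j_1}-1\}$ is precisely $\varphi_1$, which is injective by hypothesis. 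Hence $\sigma_x=\sigma_{x'}$ iff $x\equiv x'\pmod{p^{j_1}}$, so $|\sigma(X)|=p^{j_1}$.

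The remaining claims---$|\sigma^i(X)|=p^{j_i}$ for all $i$ and $\mathrm{mpl}(X)=n$---I would obtain by induction on $n$, the case $n=1$ (where $e\equiv 1$, all $\sigma_x=\psi$, and $|\sigma(X)|=1$) being immediate. For the inductive step, since $X$ is a finite cycle set its retraction $\sigma(X)$ is again a non-degenerate cycle set by \cite[Theorem 2]{rump2005decomposition}. Identifying $\sigma(X)$ with $\{0,\dots,p^{j_1}-1\}$ via $\sigma_{\bar x}\leftrightarrow\bar x$, its left multiplication by $\bar x$ sends $\bar y\mapsto\overline{\bar y+e(\bar x)}=\bar y+e'(\bar x)\pmod{p^{j_1}}$, where $e'(\bar x)=1+\sum_{l=2}^{n-1}p^{j_l}f_l(\bar x)$; that is, $\sigma(X)$ is exactly the instance of the construction with parameters $(n-1;\,j_1,\dots,j_n;\,f_2,\dots,f_{n-1})$ and cycle $\psi'=(0\;\cdots\;p^{j_1}-1)$. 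The transferred injectivity hypotheses hold because $\varphi'_i=\varphi_{i+1}$, while the transferred compatibility $Q'_{i,j}\equiv Q'_{j,i}\pmod{p^{j_1}}$ holds automatically, since $\sigma(X)$ is a cycle set and the equivalence of the first paragraph applies to it. By the inductive hypothesis $|\sigma^i(\sigma(X))|=p^{j_{i+1}}$ and $\mathrm{mpl}(\sigma(X))=n-1$; combining this with $\sigma^{i}(X)=\sigma^{\,i-1}(\sigma(X))$ and $|\sigma^0(X)|=p^{k}=p^{j_0}$ yields $|\sigma^i(X)|=p^{j_i}$ for all $i$ and $\mathrm{mpl}(X)=n$, as $j_n=0<j_{n-1}$.

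I expect the main obstacle to be the bookkeeping in the inductive step: correctly reindexing the data $(j_\bullet,f_\bullet,\varphi_\bullet)$ under retraction and confirming that the hypotheses genuinely transfer. Throughout, the reduction principle ``$e(m)$ depends only on $m\bmod p^{j_1}$'' is used repeatedly, and it is exactly this that makes the slightly unusual definition of $K_{j,i}$---omitting the $p^{j_1}f_1(i)$ summand---harmless, since that summand vanishes modulo $p^{j_1}$.
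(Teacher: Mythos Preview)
Your proof is correct and follows essentially the same route as the paper: verify the cycle set law by reducing it to the hypothesis $Q_{i,j}\equiv Q_{j,i}\pmod{p^k}$, read off $\mathcal{G}(X)=\langle\psi\rangle$ from $\sigma_0=\psi$, and then compute the retraction sizes inductively. The only organizational difference is that the paper inducts on the retraction depth $i$, explicitly writing $\sigma^i(X)$ as a cycle set on $\{0,\dots,p^{j_i}-1\}$ with left multiplications $\bar t_1^{\,1+p^{j_{n-1}}f_{n-1}(\cdot)+\cdots+p^{j_{i+1}}f_{i+1}(\cdot)}$ and invoking the injectivity of $\varphi_{i+1}$, whereas you induct on $n$ by recognising $\sigma(X)$ as an instance of the same construction with shifted data and observing that the $Q'$-condition is automatic because $\sigma(X)$ is already a cycle set. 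Your explicit ``reduction principle'' (that $e$ depends only on the residue mod $p^{j_1}$) is a useful clarification the paper leaves implicit when it asserts $\sigma_{\sigma_i(j)}\sigma_i=\psi^{2+Q_{i,j}}$; it is exactly what justifies replacing $K_{j,i}$ by $j+e(i)$ inside $g$.
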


\begin{proof}
Clearly every left multiplication is bijective. Moreover, since $Q_{i,j}\equiv Q_{j,i}\;(mod\;p^k)$ for every $i,j\in X$ we obtain that
$$\sigma_{\sigma_{i}(j)}\sigma_{i}=t_1^{2+Q_{i,j}}=t_1^{2+Q_{j,i}}=\sigma_{\sigma_{j}(i)}\sigma_{j}$$
for every $i,j\in X$, hence $X$ is a cycle set. By the definition of the left multiplications it follows that $\mathcal{G}(X)=<\psi>$ and $X$ is indecomposable.\\
Now we show  that $|\sigma^i(X)|=p^{j_i}$ for every $i\in\{0,...,n\}$ by induction on $i$. If $i=1$ then, since $\varphi_1$ is injective, we obtain that $\sigma_x=\sigma_y$ if and only if $x\equiv y\;(mod\;p^{j_1})$, hence $|\sigma(X)|=p^{j_1}$. Hence, suppose that $|\sigma^{i}(X)|$ has cardinality $p^{j_{i}}$. With a standard calculation one can see that $|\sigma^{i}(X)|$ is isomorphic to the cycle set on $\{0,...,p^{j_i}-1\}$ given by
$$\sigma_j:=\begin{cases} 
\bar{t}_1^{1+|\sigma^{n-1}(X)|f_{n-1}(j)+...+|\sigma^{i+1}(X)|f_{i+1}(j)} & \mbox{if } i<n-1\\ 
\bar{t}_1 & \mbox{if } i=n-1\end{cases} $$
for every $j\in \{0,...,p^{j_i}-1 \}$,where $\bar{t}_1:=(0,...,p^{j_i}-1)$.\\
If $i<n-1$, since $\varphi_{i+1}$ is injective, we obtain that $\sigma_j=\sigma_{j'}$ if and only if $j\equiv j'\;(mod\;p^{j_{i+1}})$, therefore $|\sigma^{i+1}(X)|=p^{j_{i+1}}$. If $i=n-1$ then all the left multiplications of $\sigma^{i}(X)$ are equal to $\bar{t}_1$, hence $|\sigma^{i+1}(X)|=|\sigma^{n}(X)|=1=p^{j_n}$. 
\end{proof}

The previous theorem allows us to obtain all the indecomposable cycle sets of prime-power size and cyclic permutation group, as we can see in the following theorem. 

\begin{theor}\label{costruz}
Let $X:=\{0,...,p^k-1\}$ be an indecomposable cycle set of order $p^k$, where $p$ is a prime integer and $k$ a natural number, having multipermutational level $n>1$ and with cyclic permutation group $\mathcal{G}(X)=<t_1>$, where $\sigma_0=t_1$. Then for every $j\in \{1,...,n-1\}$ there exists a function 
$$f_j:\mathbb{Z}/|\sigma^j(X)|\mathbb{Z}\longrightarrow \{0,...,|\sigma^{j-1}(X)|/|\sigma^j(X)|-1\}$$
such that $f_j(0)=0$, the function 
$$\varphi_b:\{0,...,|\sigma^b(X)|-1\}\longrightarrow \{1,...,|\sigma^{b-1}(X)|-1\}$$
$$i\mapsto 1+|\sigma^{n-1}(X)|f_{n-1}(i)+...+|\sigma^{b}(X)|f_{b}(i)$$
 is injective and
\begin{equation}\label{req1}
\sigma_{i}=t_1^{1+|\sigma^{n-1}(X)|f_{n-1}(i)+...+|\sigma(X)|f_{1}(i)}
\end{equation}
for every $i\in \{0,...,p^k-1\}$ and $b\in \{1,...,n-1\}$. Moreover, if we put 
$$K_{j,i}:=j+1+|\sigma^{n-1}(X)|f_{n-1}(i)+...+|\sigma^2(X)|f_2(i)$$
and
$$Q_{j,i}:= |\sigma^{n-1}(X)|f_{n-1}(i)+...+|\sigma(X)|f_1(i)+|\sigma^{n-1}(X)|f_{n-1}(K_{j,i})+...+|\sigma(X)|f_1(K_{j,i})$$
for every $i,j\in \{0,...,p^k-1\}$ then $Q_{i,j}\equiv Q_{j,i}\;(mod\;p^k)$.\\
\end{theor}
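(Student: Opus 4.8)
The plan is to recognise that \eqref{req1} is exactly the mixed--radix expansion of the exponents of the left multiplications, and that the congruence on $Q$ is the cycle--set identity read off in these coordinates. Throughout I write $m_s:=|\sigma^s(X)|$, so $m_0=p^k$, $m_n=1$, and each $m_s$ divides $m_{s-1}$ by \cite[Lemma 1]{cacsp2018} (in particular every $m_s$ is a power of $p$); I also write $\sigma_i=t_1^{e_i}$ with $e_i\in\{1,\dots,p^k-1\}$, where $e_0=1$ and $\gcd(e_i,p)=1$ by Corollary \ref{consegen}, and recall from Lemma \ref{retrgen} that $e_i$ depends only on $i\bmod m_1$.

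First I would secure the divisibility that makes the expansion possible. Applying Lemma \ref{congru} with $s=n$ gives $m_n=1$, hence $e_{i+1}\equiv e_i\pmod{m_{n-1}}$ for all $i$; since $e_0=1$, this forces $e_i\equiv 1\pmod{m_{n-1}}$ for every $i$. Thus $e_i-1$ is a multiple of $m_{n-1}$ lying in $\{0,\dots,p^k-1\}$, and Lemma \ref{scomp} supplies unique digits $f_b(i)\in\{0,\dots,m_{b-1}/m_b-1\}$, $b\in\{1,\dots,n-1\}$, with $e_i-1=\sum_{b=1}^{n-1}m_b f_b(i)$; this is \eqref{req1}, and $f_b(0)=0$ since $e_0-1=0$. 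To see that $f_b$ descends to $\mathbb{Z}/m_b\mathbb{Z}$ I would invoke Lemma \ref{congru} with $s=b$, which makes $e_i\bmod m_{b-1}$ periodic in $i$ of period $m_b$; as the low part $\sum_{c\ge b}m_c f_c(i)=(e_i-1)\bmod m_{b-1}$ determines $f_b(i)$, the latter has period $m_b$.

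Next I would establish the injectivity of $\varphi_b$. Since $\varphi_b(i)-1=(e_i-1)\bmod m_{b-1}$, we get $\varphi_b(i)=\varphi_b(i')$ iff $e_i\equiv e_{i'}\pmod{m_{b-1}}$, so it suffices that on the transversal $\{0,\dots,m_b-1\}$ the residues $e_i\bmod m_{b-1}$ are pairwise distinct. Iterating the reduction from the proof of Lemma \ref{congru}, the left multiplication of $[i]$ in $\sigma^{b-1}(X)$ is the $(e_i\bmod m_{b-1})$--th power of an $m_{b-1}$--cycle, and these take exactly $|\sigma^b(X)|=m_b$ distinct values, indexed bijectively by $i\bmod m_b$ through Lemma \ref{retrgen} applied to $\sigma^{b-1}(X)$; this gives the required injectivity.

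Finally I would read off the congruence on $Q$ from the cycle--set identity. As operators it says $\sigma_{\sigma_i(j)}\sigma_i=\sigma_{\sigma_j(i)}\sigma_j$, i.e.\ $e_i+e_{j+e_i}\equiv e_j+e_{i+e_j}\pmod{p^k}$. Substituting \eqref{req1} yields $e_i+e_{j+e_i}=2+\sum_{b=1}^{n-1}m_b f_b(i)+\sum_{b=1}^{n-1}m_b f_b(j+e_i)$. The crucial observation is that $j+e_i-K_{j,i}=m_1 f_1(i)$ is a multiple of $m_1$, hence of every $m_b$ since $m_b\mid m_1$; because each $f_b$ has period $m_b$, one may replace $f_b(j+e_i)$ by $f_b(K_{j,i})$ for every $b$, giving $e_i+e_{j+e_i}=2+Q_{j,i}$. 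Swapping $i$ and $j$ and using the displayed congruence then forces $Q_{i,j}\equiv Q_{j,i}\pmod{p^k}$. I expect this final bookkeeping --- that the $f_1(i)$--contribution to the shift index may be discarded inside every digit function because all moduli divide $m_1$ --- to be the only genuinely delicate step, the divisibility $e_i\equiv 1\pmod{m_{n-1}}$ and the injectivity reduction being routine consequences of the lemmas already in place.
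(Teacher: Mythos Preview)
Your argument is correct and follows essentially the same route as the paper: use Lemma~\ref{scomp} for the mixed-radix expansion of the exponents, Lemma~\ref{congru} to make each digit $f_b$ well-defined modulo $m_b$, Lemma~\ref{retrgen} (iterated through the retractions) for the injectivity of each $\varphi_b$, and the cycle-set identity $\sigma_{\sigma_i(j)}\sigma_i=\sigma_{\sigma_j(i)}\sigma_j$ for the congruence on $Q$. Your treatment is in fact slightly more explicit than the paper's in two places: you justify $e_i\equiv 1\pmod{m_{n-1}}$ via Lemma~\ref{congru} with $s=n$ (the paper only invokes Corollary~\ref{consegen} for $e_i\equiv 1\pmod p$, which is not quite enough to pin down the constant term of the expansion), and you spell out the identity $j+e_i-K_{j,i}=m_1 f_1(i)$ together with $m_b\mid m_1$ that the paper leaves implicit when asserting $\sigma_{\sigma_i(j)}\sigma_i=t_1^{2+Q_{j,i}}$.
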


\begin{proof}
By Corollary \ref{consegen} for every $j\in X$ there exists $m_j\in \{1,...,p^k-1\}$ such that 
$$\sigma_{j}=t_1^{m_j}\; and \; m_j\equiv 1\;(mod\;p).$$
By Lemma \ref{scomp}, there exist a unique $n-1-$uple $(a_1,...,a_{n-1})$ such that
$$m_j=1+a_{1}|\sigma(X)|+...+a_{n-1}|\sigma^{n-1}(X)| $$
and $a_j\in \{0,...,|\sigma^{j-1}(X)|/|\sigma^j(X)|-1\}$ for every $j\in \{1,...,n-1\}$.\\
Now, by Lemma \ref{congru} we can define the function  
$$f_l:\mathbb{Z}/|\sigma^l(X)|\mathbb{Z}\longrightarrow \{0,...,|\sigma^{l-1}(X)|/|\sigma^l(X)|-1\}$$
$$f_l([j]):=a_l$$
for every $j\in X$ and $l\in \{1,...,n-1\}$, where $[j]$ denote the class of $j$ module $|\sigma^l(X)|$. By construction we have that (\ref{req1}) holds.\\
Moreover, since $\sigma_{\sigma_{i}(j)}\sigma_{i}=\sigma_{\sigma_{j}(i)}\sigma_{j}$ for every $i,j\in X$, we have that $t_1^{2+Q_{i,j}}=t_1^{2+Q_{j,i}}$, hence $Q_{i,j}\equiv Q_{j,i}\;(mod\;p^k)$.\\
It remains to show that the function $\varphi_b$ is injective for every $b\in \{1,...,n-1\}$. By Lemma \ref{retrgen}, we have that $\varphi_1$ is injective.\\
Now, $\sigma(X)$ is an indecomposable cycle set such that $\mathcal{G}(\sigma(X))=<\bar{t}_1>$, where $\bar{t}_1:=(\sigma_{0},...,\sigma_{{|\sigma(X)|}})$, and $\sigma_{\sigma_i}=\bar{t}_1^{1+|\sigma^{n-1}(X)|f_{n-1}(i)+...+|\sigma^2(X)|f_{2}(i)}$ for every $\sigma_i\in \sigma(X)$. By Lemma \ref{retrgen} applied to $\sigma(X)$ we obtain that $\varphi_2$ is injective. With a similar argument on $\sigma^z(X)$ one can show that $\varphi_{z+1}$ is injective for every $z\in\{1,...,n-2\}$.
\end{proof}

As an easy consequence of the previous theorems, we obtain the following corollary.

\begin{cor}
Let $X$ be a finite indecomposable cycle set having permutation group $\mathcal{G}(X)$ isomorphic to $(\mathbb{Z}/p^k\mathbb{Z},+)$. Then $X$ is either a multipermutational cycle set of level $1$ or one of the cycle sets constructed in Theorem \ref{costruz2}.
\end{cor}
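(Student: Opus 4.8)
The plan is to derive the corollary by combining the two main theorems of the section with the structural results already established about indecomposable cycle sets of prime-power order and cyclic permutation group. First I would observe that the hypothesis $\mathcal{G}(X)\cong(\mathbb{Z}/p^k\mathbb{Z},+)$ is cyclic and abelian, so by Proposition \ref{dimoabel} the cycle set $X$ is multipermutational; let $n:=mpl(X)$ denote its multipermutational level. Since $\mathcal{G}(X)$ acts transitively on $X$ (indecomposability, via \cite[Prop. 2.12]{etingof1998set}) and is regular, we have $|X|=|\mathcal{G}(X)|=p^k$, so $X$ has prime-power size and we may apply the machinery of this section.

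Next I would split into two cases according to the value of $n$. If $n=1$, then $X$ is a multipermutational cycle set of level $1$, which is exactly the first alternative in the statement, so there is nothing more to prove. If $n>1$, the goal is to identify $X$ with one of the cycle sets produced by Theorem \ref{costruz2}. To do this I would invoke Theorem \ref{costruz}: after normalizing (using Lemma \ref{ungen} and the subsequent remark) so that $X=\{0,\dots,p^k-1\}$, $\mathcal{G}(X)=\langle t_1\rangle$ and $\sigma_0=t_1$, Theorem \ref{costruz} supplies functions $f_1,\dots,f_{n-1}$ with $f_j(0)=0$, injectivity of the associated maps $\varphi_b$, the explicit formula (\ref{req1}) for the left multiplications $\sigma_i$, and the compatibility congruence $Q_{i,j}\equiv Q_{j,i}\pmod{p^k}$.

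The key point is then to verify that these data are precisely an instance of the input required by Theorem \ref{costruz2}. Setting $j_i:=\log_p|\sigma^i(X)|$ for $i\in\{0,\dots,n\}$, one has $j_0=k$, $j_n=0$, and $j_i<j_{i-1}$ because the retraction strictly decreases cardinality at each step (Proposition \ref{dimoabel}), while $|\sigma^i(X)|$ is a power of $p$ dividing $|\sigma(X)|$ by \cite[Lemma 1]{cacsp2018}; thus the exponents $j_i$ satisfy exactly the monotonicity hypotheses of Theorem \ref{costruz2}. The functions $f_i$, the injectivity of the $\varphi_i$, and the congruence on the $Q$'s transfer verbatim, since Theorem \ref{costruz} delivers them in the same notation. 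Feeding these into Theorem \ref{costruz2} reconstructs a cycle set with the same left multiplications as $X$ on the same underlying set, hence equal to $X$; by construction it is one of the solutions of Theorem \ref{costruz2}.

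The main obstacle I anticipate is largely bookkeeping rather than conceptual: one must check that the two theorems genuinely use compatible normalizations and that the exponents $j_i=\log_p|\sigma^i(X)|$ read off from Theorem \ref{costruz} match the abstract parameters of Theorem \ref{costruz2}, including the boundary values $j_0=k$ and $j_n=0$. A small subtlety is confirming that a cycle set reconstructed by Theorem \ref{costruz2} from the data extracted in Theorem \ref{costruz} really coincides with the original $X$ and not merely with an isomorphic copy; but since both are defined on $\{0,\dots,p^k-1\}$ through the identical formula (\ref{req1}) for the maps $\sigma_i$, the left multiplications agree pointwise and the cycle sets are literally the same. With these identifications in place the corollary follows immediately from the conjunction of Theorems \ref{costruz} and \ref{costruz2}.
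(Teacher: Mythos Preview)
Your proposal is correct and follows essentially the same approach as the paper: deduce that $X$ is multipermutational with $|X|=p^k$, normalize via Lemma \ref{ungen}, and then invoke Theorem \ref{costruz} to recover the data $(j_i,f_i,\varphi_i,Q_{i,j})$ required by Theorem \ref{costruz2}. The paper's proof is more terse—it simply cites Lemma \ref{ungen} and Theorem \ref{costruz} and leaves implicit the bookkeeping you spell out (that the extracted parameters satisfy the hypotheses of Theorem \ref{costruz2} and that the reconstructed cycle set coincides with $X$)—but the logical route is the same.
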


\begin{proof}
Since $X$ is indecomposable and $\mathcal{G}(X)$ is cyclic, we have that $X$ is multipermutational and $|X|=|\mathcal{G}(X)|=p^k$. Then the thesis follows by Lemma \ref{ungen} and Theorem \ref{costruz}.
\end{proof}

We conclude this section using the previous results to provide some examples of indecomposable cycle sets with cyclic permutation group.

\begin{ex}
Let $X:=\{0,1,2,3\}$, $n:=2$, $j_0:=4,$ $j_1:=2$, $j_2:=1$ and $f_1:\mathbb{Z}/2\mathbb{Z}\longrightarrow \{0,1\}$ the function given by $f_1(0):=0$ and $f_1(1):=1$. Moreover, let $\varphi_1,\{\sigma_x\}_{x\in X},$ $\{K_{i,j}\}_{i,j\in X}$ and $\{Q_{i,j}\}_{i,j\in X}$ be as in Theorem \ref{costruz2}.\\
Then, by Theorem \ref{costruz2} $X$ is an indecomposable cycle set of level $2$ and with permutation group generated by the $4$-cycle $(0\;1\;2\;3)$. In particular,
$$\sigma_0=\sigma_2:=(0\;1\;2\;3)\quad  \sigma_1=\sigma_3:=(0\;3\;2\;1).$$

\end{ex}

\begin{ex}
Let $X:=\{0,1,2,3,4,5,6,7\}$, $n:=2$, $j_0:=8,$ $j_1:=2$, $j_2:=1$ and $f_1:\mathbb{Z}/2\mathbb{Z}\longrightarrow \{0,1,2,3\}$ the function given by $f_1(0):=0$ and $f_1(1):=2$. Moreover, let $\varphi_1,\{\sigma_x\}_{x\in X},$ $\{K_{i,j}\}_{i,j\in X}$ and $\{Q_{i,j}\}_{i,j\in X}$ be as in Theorem \ref{costruz2}.\\
Then, by Theorem \ref{costruz2} $X$ is an indecomposable cycle set of level $2$ and with permutation group generated by the $8$-cycle $(0\;1\;2\;3\;4\;5\;6\;7)$. In particular,
$$\sigma_0=\sigma_2=\sigma_4=\sigma_6:=(0\;1\;2\;3\;4\;5\;6\;7)$$
$$ \sigma_1=\sigma_3=\sigma_5=\sigma_7:=(0\;5\;2\;7\;4\;1\;6\;3).$$

\end{ex}

\begin{ex}
Let $X:=\{0,1,2,...,30,31\}$, $n:=3$, $j_0:=32,$ $j_1:=8$, $j_2:=2$, $j_3=1$, $f_1:\mathbb{Z}/8\mathbb{Z}\longrightarrow \{0,1,2,3\}$ the function given by 
$$f_1(0)=f_1(1):=0\quad f_1(2)=f_1(3):=3$$
$$f_1(4)=f_1(5):=2\quad f_1(6)=f_1(7):=1$$ 
and $f_2:\mathbb{Z}/2\mathbb{Z}\longrightarrow \{0,1,2,3\}$ the function given by $f_2(0):=0$ and $f_2(1)=2$.
Moreover, let $\varphi_1,\varphi_2,\{\sigma_x\}_{x\in X},$ $\{K_{i,j}\}_{i,j\in X}$ and $\{Q_{i,j}\}_{i,j\in X}$ be as in Theorem \ref{costruz2}.\\
Then, by Theorem \ref{costruz2} $X$ is an indecomposable cycle set of level $3$ and with permutation group generated by $\alpha:=(0\;1\;...\;30\;31)$. The left multiplications of $X$ are defined as follows 
$$\sigma_x:= \begin{cases} 
\alpha &\mbox{if } x \equiv 0\;(mod\; 8) \\ 
\alpha^5 & \mbox{if } x \equiv 1\;(mod\; 8)\\ 
\alpha^{25} & \mbox{if } x \equiv 2\;(mod\; 8)\\ 
\alpha^{29} & \mbox{if } x \equiv 3\;(mod\; 8)\\ 
\alpha^{17} & \mbox{if } x \equiv 4\;(mod\; 8)\\ 
\alpha^{21} & \mbox{if } x \equiv 5\;(mod\; 8)\\ 
\alpha^9 & \mbox{if } x \equiv 6\;(mod\; 8)\\ 
\alpha^{13} & \mbox{if } x \equiv 7\;(mod\; 8)\end{cases} $$
for every $x\in X$. In particular, $\sigma(X)$ is isomorphic to the previous example.

\end{ex}

\section{Indecomposable cycle sets of cardinality pq and cyclic permutation group} 

In the first part of this section we use the results of the previous one to give a complete classification of indecomposable cycle sets of size $p^2$ (where $p$ is a prime number) and with cyclic permutation group.\\
In the end, we will show that all the indecomposable cycle sets of cardinality $pq$ (where $p$ and $q$ are distinct prime numbers) and cyclic permutation group have multipermutational level equal to $1$.
\newline
We start the section with a simple but useful lemma.

\begin{lemma} \label{lemma2}
Let $f:\mathbb{Z}/p\mathbb{Z}\to \{0,...,p-1\}$ be a bijection such that $f(0)=0$ and
\begin{equation} \label{eq}
f(i+1)+f(j)\equiv f(i)+f(j+1)(mod\; p)
\end{equation} 
for all $i,j\in \mathbb{Z}/p\mathbb{Z}$.
Then there exist $t\in \mathbb{Z}/p\mathbb{Z}$, with $t\neq 0$ such that 
\begin{equation}\label{eq2}
f(k)=\overline{kt}
\end{equation}
for every $k\in \mathbb{Z}/p\mathbb{Z}$, where $\overline{kt}$ is the representative of $kt$ in $\{0,...,p-1\}$.
Conversely, every maps of type \eqref{eq2} satisfies \eqref{eq}. In particular there are $(p-1)$ bijections  of this type.
\end{lemma}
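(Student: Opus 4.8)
The plan is to recognize that the functional equation \eqref{eq} is, after rearrangement, nothing more than the assertion that the consecutive differences of $f$ are constant. First I would move $f(i)$ and $f(j)$ to opposite sides of \eqref{eq}, obtaining
$$f(i+1)-f(i)\equiv f(j+1)-f(j)\pmod p$$
for all $i,j\in\mathbb{Z}/p\mathbb{Z}$. This says precisely that the discrete difference $i\mapsto f(i+1)-f(i)$ takes a single value, independent of $i$; call this common value $t\in\mathbb{Z}/p\mathbb{Z}$.

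Next I would telescope. Using $f(0)=0$ together with $f(i+1)-f(i)\equiv t$, a straightforward induction on $k$ gives $f(k)\equiv kt\pmod p$ for every $k$, which is exactly \eqref{eq2} once we pass to the representative in $\{0,\dots,p-1\}$. To see that $t\neq 0$, I would note that $t=0$ forces $f$ to be identically $0$, contradicting that $f$ is a bijection; hence $t\in\{1,\dots,p-1\}$.

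For the converse, given any $t\neq 0$ I would set $f(k):=\overline{kt}$ and check directly that $f(0)=0$, that $f$ is a bijection (multiplication by a nonzero element is invertible in the field $\mathbb{Z}/p\mathbb{Z}$), and that \eqref{eq} holds because every consecutive difference equals $t$. To count the maps, I would then observe that distinct values of $t\in\{1,\dots,p-1\}$ yield distinct functions: evaluating at $k=1$ recovers $t$ from $f$, so the assignment $t\mapsto f$ is injective and there are exactly $p-1$ such bijections.

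I do not expect a genuine obstacle here, since the argument is essentially a separation-of-variables observation followed by a telescoping sum. The only point that requires mild care is the bookkeeping between congruences modulo $p$ and the chosen representatives in $\{0,\dots,p-1\}$, so that the claimed identity $f(k)=\overline{kt}$ is read as an honest equality in the target set and not merely as a congruence.
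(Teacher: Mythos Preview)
Your proposal is correct and is precisely the ``straightforward calculation'' that the paper omits: rearranging \eqref{eq} to see that the consecutive differences are a constant $t$, telescoping using $f(0)=0$ to get $f(k)\equiv kt$, noting $t\neq 0$ by bijectivity, and checking the converse and the count directly. There is nothing to add.
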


\begin{proof}
It is a straightforward calculation.
\end{proof}

\begin{cor}\label{prop3}
Let $X:=\{0,...,p^2-1\}$ be an indecomposable cycle sets of cardinality $p^2$ of multipermutational level equal to $2$ having cyclic permutation group. Then there exists a bijection $f:\mathbb{Z}/p\mathbb{Z}\to \{0,...,p-1\}$ satisfying \eqref{eq} such that $f(0)=0$ and $\sigma_{i}=t_1^{1+pf([i])}$ for every $i=\{0, \ldots p^2-1\}$, where $[i]$ is the class of $i$ module $p$.\\
Conversely, let $X:=\{x_0, \ldots , x_{p^2-1}\}$, $f:\mathbb{Z}/p\mathbb{Z}\to \{0,...,p-1\}$ a bijective map with $f(0)=0$ such that \eqref{eq} holds and $\phi$ the permutation given by $\phi:=(x_0,\ldots x_{p^2-1})$. Let $\cdot$ be the binary operation  on $X$ given by
$\sigma_{x_i}:=\phi^{1+p f([i])}$ for every $i\in \{0,\ldots ,p^{2}-1\}$. Then $(X,\cdot)$ is an indecomposable cycle set of multipermutational level equal to $2$ and with $\mathcal{G}(X)=<\phi>$.
\end{cor}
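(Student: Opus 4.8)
The plan is to deduce both implications directly from the construction results of the previous section, namely Theorem~\ref{costruz} and Theorem~\ref{costruz2}, specialized to the case $k=n=2$. By Lemma~\ref{ungen} and the standing normalization following it, we may assume $X=\{0,\dots,p^2-1\}$, $\mathcal{G}(X)=\langle t_1\rangle$ with $t_1=(0\ \dots\ p^2-1)$ and $\sigma_0=t_1$. In this regime one has $|\sigma^0(X)|=p^2$, $|\sigma^1(X)|=p$ and $|\sigma^2(X)|=1$, so that the families $\{f_\ell\}$ and the quantities $K_{j,i},Q_{j,i}$ appearing in those theorems collapse to a single function $f:=f_1\colon\mathbb{Z}/p\mathbb{Z}\to\{0,\dots,p-1\}$ and to very short sums. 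The conceptual core of the proof is the observation that, after this collapse, the abstract compatibility condition $Q_{i,j}\equiv Q_{j,i}\pmod{p^2}$ becomes exactly the linear cocycle identity~\eqref{eq}; Lemma~\ref{lemma2} is then available to describe such an $f$ explicitly, although it is not needed for the corollary itself, which only asserts existence.

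For the forward direction I would apply Theorem~\ref{costruz} with $n=2$, which is legitimate since the hypothesis is precisely that the level equals $2>1$. It produces a function $f_1\colon\mathbb{Z}/p\mathbb{Z}\to\{0,\dots,p-1\}$ with $f_1(0)=0$ for which $\varphi_1(i)=1+p\,f_1(i)$ is injective; as its domain and codomain both have $p$ elements, $f:=f_1$ is a bijection. The formula~\eqref{req1} reduces, for $n=2$, to $\sigma_i=t_1^{1+p\,f([i])}$, which is the asserted shape. It remains to check~\eqref{eq}. With $n=2$ the definition of $K_{j,i}$ carries no $f$-terms, so $K_{j,i}=j+1$, while the definition of $Q_{j,i}$ retains only its two $f_1$-contributions, giving $Q_{j,i}=p\,f(i)+p\,f(j+1)$ and symmetrically $Q_{i,j}=p\,f(j)+p\,f(i+1)$. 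Hence the congruence $Q_{i,j}\equiv Q_{j,i}\pmod{p^2}$ supplied by Theorem~\ref{costruz} reads $p\bigl(f(j)+f(i+1)\bigr)\equiv p\bigl(f(i)+f(j+1)\bigr)\pmod{p^2}$, and cancelling the factor $p$ yields precisely~\eqref{eq}.

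For the converse I would run the same computation in reverse through Theorem~\ref{costruz2}. Identifying $X=\{x_0,\dots,x_{p^2-1}\}$ with $\{0,\dots,p^2-1\}$ and $\phi$ with the cycle $\psi$, I take $k=2$, $n=2$, $j_0=2$, $j_1=1$, $j_2=0$ and $f_1=f$, and verify the hypotheses of that theorem: $f(0)=0$ holds by assumption; the injectivity of $\varphi_1(i)=1+p\,f(i)$ follows because $f$ is a bijection; and the required congruence $Q_{i,j}\equiv Q_{j,i}\pmod{p^2}$ is, by the same degeneration $K_{j,i}=j+1$ and $Q_{j,i}=p\,f(i)+p\,f(j+1)$ computed above, equivalent to~\eqref{eq}, which is part of the hypothesis. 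Theorem~\ref{costruz2} then returns that $(X,\cdot)$ is an indecomposable cycle set of level $n=2$ with $\mathcal{G}(X)=\langle\phi\rangle$ and $|\sigma^i(X)|=p^{j_i}$, as claimed.

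The only genuine work is the bookkeeping of the two displayed sums once $n=2$ is imposed, and I expect no real obstacle here: the potentially delicate mixed terms $f_\ell(K_{j,i})$ with $\ell\ge 2$ simply do not occur, so that both $K$ and $Q$ become elementary and the congruence modulo $p^2$ cleanly halves to one modulo $p$. The one point requiring a word of care is the index identification between the abstract setting of Theorem~\ref{costruz2} (stated over $\{0,\dots,p^k-1\}$ with the cycle $\psi$) and the labelled set $\{x_0,\dots,x_{p^2-1}\}$ with $\phi$ appearing in the corollary, but this is a harmless relabelling.
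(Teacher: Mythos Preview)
Your proposal is correct and follows exactly the route the paper takes: the paper's own proof consists of the single sentence ``It follows by Theorems~\ref{costruz2} and~\ref{costruz}'', and what you have done is spell out that specialization to $k=n=2$ in detail, verifying that $K_{j,i}=j+1$, $Q_{j,i}=p\,f(i)+p\,f(j+1)$, and hence that the compatibility condition becomes~\eqref{eq}. There is nothing to add.
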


\begin{proof}
It follows by Theorems \ref{costruz2} and \ref{costruz}.
\end{proof}

By the previous Corollary, we have that for every indecomposable cycle set $X$ of size $p^2$, multipermutational level $2$ and with cyclic permutation group $\mathcal{G}(X)=<\sigma_x>$ (where $x$ is an arbitrary element of $X$) there exists a bijective function $f:\mathbb{Z}/p\mathbb{Z}\to \{0,...,p-1\}$ such that $f(0)=0$ and $\sigma_y:=\sigma_x^{1+pf([i])}$ for every $y\in X$, where $i$ is a natural number such that $y=\sigma_x^i(x)$. It is easy to see that the function $f$ does not depend on the choice of the element $x$. In the following proposition we will see that $f$ allows us to distinguish the isomorphism classes of these cycle sets.

\begin{prop}\label{propoiso}
Let $X$ and $Y$ be indecomposable cycle sets of cardinality $p^2$, multipermutational level $2$ and with cyclic permution groups. Let $x_0\in X$ such that $\sigma_{x_0}=\phi=(x_0...x_{p^2-1})$. Let $f_X:\mathbb{Z}/p\mathbb{Z}\to \{0,...,p-1\}$ and $f_Y:\mathbb{Z}/p\mathbb{Z}\to \{0,...,p-1\}$ be the bijective functions constructed as in the previous remark, hence $\sigma_{x_i}=\phi^{1+pf_X([i])}$ for every $i\in \{0,...,p-1\}$. Then $f_X=f_Y$ if and only if $X\cong Y$.
\end{prop}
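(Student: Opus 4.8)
The plan is to prove both implications, the forward one being a short explicit construction and the reverse one carrying the bulk of the work.

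\emph{The implication $f_X=f_Y\Rightarrow X\cong Y$.} Write $X=\{x_0,\dots,x_{p^2-1}\}$ with $\sigma_{x_0}=\phi=(x_0\,\dots\,x_{p^2-1})$ and $Y=\{y_0,\dots,y_{p^2-1}\}$ with $\sigma_{y_0}=\psi=(y_0\,\dots\,y_{p^2-1})$, so that by Corollary~\ref{prop3} one has $\sigma_{x_i}=\phi^{1+pf_X([i])}$ and $\sigma_{y_i}=\psi^{1+pf_Y([i])}$. If $f_X=f_Y$, I would define $g\colon X\to Y$ by $g(x_i):=y_i$. Then $g\phi g^{-1}=\psi$, and since a bijection $g$ is a cycle set isomorphism exactly when $g\sigma_a g^{-1}=\sigma_{g(a)}$ for all $a$, it suffices to compute $g\sigma_{x_i}g^{-1}=g\phi^{1+pf_X([i])}g^{-1}=\psi^{1+pf_Y([i])}=\sigma_{y_i}=\sigma_{g(x_i)}$, which is immediate.

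\emph{The implication $X\cong Y\Rightarrow f_X=f_Y$.} Let $g\colon X\to Y$ be an isomorphism. The key first step is to determine the shape of $g$. Since $g\sigma_a g^{-1}=\sigma_{g(a)}$, writing $g(x_0)=y_m$ gives $g\phi g^{-1}=g\sigma_{x_0}g^{-1}=\sigma_{y_m}=\psi^{c}$ with $c:=1+pf_Y([m])$; note $c\equiv 1\pmod p$, so $\psi^c$ is again a $p^2$-cycle, consistent with $g\phi g^{-1}$ being one. Applying $g$ to $x_i=\phi^i(x_0)$ then yields
\[
g(x_i)=(g\phi g^{-1})^{i}\big(g(x_0)\big)=\psi^{ci}(y_m)=y_{ci+m},
\]
with indices read modulo $p^2$.

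\emph{Deriving the functional equation.} Next I would impose $g\sigma_{x_i}g^{-1}=\sigma_{g(x_i)}$ for a general $i$. The left side equals $(g\phi g^{-1})^{1+pf_X([i])}=\psi^{c(1+pf_X([i]))}$ and the right side equals $\sigma_{y_{ci+m}}=\psi^{1+pf_Y([ci+m])}$. Comparing exponents modulo $p^2$ and using $c=1+pf_Y([m])$, so that $c(1+pf_X([i]))\equiv 1+p\big(f_X([i])+f_Y([m])\big)\pmod{p^2}$, reduces the equality to
\[
f_X([i])+f_Y([m])\equiv f_Y([ci+m])\pmod p .
\]
Because $c\equiv 1\pmod p$ we have $[ci+m]=[i+m]$ in $\mathbb{Z}/p\mathbb{Z}$, so with $a:=[i]$ and $b:=[m]$ the relation becomes $f_X(a)\equiv f_Y(a+b)-f_Y(b)\pmod p$ for all $a\in\mathbb{Z}/p\mathbb{Z}$. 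Finally, Lemma~\ref{lemma2} gives $f_Y(k)=\overline{kt}$ for some $t\neq0$, whence $f_Y(a+b)-f_Y(b)\equiv f_Y(a)\pmod p$; thus $f_X(a)\equiv f_Y(a)\pmod p$, and since both values lie in $\{0,\dots,p-1\}$ we conclude $f_X=f_Y$.

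The main obstacle is the reverse direction, and within it the only genuinely delicate point is pinning down the form $g(x_i)=y_{ci+m}$ and carrying out the exponent comparison modulo $p^2$ correctly; the observation $c\equiv1\pmod p$ is what collapses the congruence to a linear relation in $\mathbb{Z}/p\mathbb{Z}$, after which the linearity of $f_X,f_Y$ supplied by Lemma~\ref{lemma2} closes the argument at once.
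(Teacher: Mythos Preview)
Your proof is correct, but it takes a somewhat different route from the paper in the direction $X\cong Y\Rightarrow f_X=f_Y$. The paper exploits the remark immediately preceding the proposition, namely that the function $f$ attached to a cycle set does not depend on the chosen base point. Concretely, given an isomorphism $F$, the paper sets $y_0:=F(x_0)$ and $y_i:=\sigma_{y_0}^{\,i}(y_0)$, so that $F(x_i)=y_i$ automatically; using basepoint-independence, the $f_Y$ computed from this labeling is the same as the original one, and the identity $F(x_i\cdot x_j)=F(x_i)\cdot F(x_j)$ gives $j+1+pf_X([i])\equiv j+1+pf_Y([i])\pmod{p^2}$ directly. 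Your approach instead keeps the original labeling of $Y$, writes $g(x_0)=y_m$ and $g(x_i)=y_{ci+m}$ with $c=1+pf_Y([m])$, and then absorbs the shift $m$ at the end using the linearity of $f_Y$ from Lemma~\ref{lemma2}. In effect you are reproving the basepoint-independence inside the argument rather than invoking it. The paper's version is shorter; yours is more self-contained since it does not rely on the unproved remark.
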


\begin{proof}
Suppose that there exist an isomorphism $F$ from $X$ to $Y$ and define $y_0:= F(x_0)$ and $y_i:=y_0\cdotp y_{i-1}$ for every $i>0$. Then 
$$F(x_i\cdotp x_j)=F(\phi^{1+pf_X(i)}(x_j))=F(\sigma_{x_0}^{1+pf_X([i])}(x_j))=\sigma_{y_0}^{1+pf_X([i])}(y_j)=y_{j+1+pf_X([i])} $$
and
$$F(x_i)\cdotp F(x_j)=y_i\cdotp y_j=\sigma_{y_0}^{1+pf_Y([i])}(y_j)=y_{j+1+pf_Y([i])}$$
for every $i,j\in \{0,...,p^2-1\}$, where $j+1+pf_X([i])$ and $j+1+pf_Y([i])$ are considered module $p^2$. This implies that 
$$j+1+pf_X([i])\equiv j+1+pf_Y([i])\; (mod\; p^2) $$
for every $i,j\in \{0,...,p^2-1\}$ and hence $f_X=f_Y$.\\
The converse is an easy calculation.
\end{proof}

Using the previous results, a complete classification of indecomposable cycle sets of size $p^2$ and cyclic permutation group is obtained.

\begin{theor}
For every prime number $p$ there are, up to isomorphism, $p$ indecomposable cycle sets of cardinality $p^2$ and cyclic permutation group: $p-1$ having level $2$ and $1$ of level $1$. In particular, those having level $2$ are given (up to isomorphisms) by $X:=\{0,...,p^2-1\}$ and $\sigma_{i}(j):=t_1^{1+pf(i)}$ for every $i,j\in X$, where $f$ is one of the map of Lemma \ref{lemma2}.
\end{theor}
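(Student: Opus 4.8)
The plan is to prove this counting theorem by assembling the characterization and isomorphism results already established in this section, rather than by any fresh construction. The statement has three parts: first, that the level-$2$ indecomposable cycle sets of cardinality $p^2$ with cyclic permutation group are exactly those of the displayed form $\sigma_i = t_1^{1+pf(i)}$ with $f$ a bijection satisfying \eqref{eq}; second, that there are precisely $p-1$ of them up to isomorphism; and third, that there is exactly one of level $1$, giving the total count of $p$.

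First I would dispose of the level-$1$ case. An indecomposable cycle set of level $1$ is one whose retraction is a single point, i.e.\ all left multiplications coincide; since the permutation group is cyclic and transitive on a set of size $p^2$, there is (up to isomorphism) a unique such object, namely the one with $\sigma_i = t_1$ for all $i$. This is the analogue of the unique indecomposable solution of prime order recalled in the introduction, and the uniqueness is immediate because the data is completely rigid once one fixes $t_1$ as a $p^2$-cycle.

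Next I would handle the level-$2$ count, which is the substance of the theorem. By Corollary \ref{prop3}, every indecomposable cycle set of size $p^2$, level $2$, with cyclic permutation group arises from a bijection $f\colon\mathbb{Z}/p\mathbb{Z}\to\{0,\dots,p-1\}$ satisfying \eqref{eq} with $f(0)=0$, via $\sigma_i = t_1^{1+pf([i])}$; and conversely each such $f$ yields a genuine cycle set of this type. By Proposition \ref{propoiso}, two such cycle sets are isomorphic if and only if their associated functions $f_X$ and $f_Y$ are equal. Hence the number of isomorphism classes equals the number of admissible functions $f$. By Lemma \ref{lemma2}, the bijections $f$ with $f(0)=0$ satisfying \eqref{eq} are exactly the maps $f(k)=\overline{kt}$ for $t\in\mathbb{Z}/p\mathbb{Z}\setminus\{0\}$, of which there are precisely $p-1$. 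Combining these three citations gives both the explicit description $\sigma_i(j) = t_1^{1+pf(i)}$ and the count of $p-1$ isomorphism classes of level $2$, so that the grand total is $(p-1)+1 = p$.

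The main obstacle, such as it is, is essentially bookkeeping: one must verify that the isomorphism invariant $f$ used in Proposition \ref{propoiso} is well-defined independently of the choice of generating element $x$ with $\sigma_x$ a full cycle, and that every function arising as a valid $f$ in Corollary \ref{prop3} is indeed a bijection satisfying \eqref{eq} (so that Lemma \ref{lemma2} applies verbatim). Both of these points have already been addressed in the remark preceding Proposition \ref{propoiso} and in the hypotheses of Corollary \ref{prop3}, so the proof reduces to citing Corollary \ref{prop3}, Proposition \ref{propoiso}, and Lemma \ref{lemma2} in sequence, together with the elementary observation that the level-$1$ object is unique. I would therefore write the proof as a short three-sentence chaining of these results, making explicit that the bijection between isomorphism classes and admissible $f$'s transports the count $p-1$ from Lemma \ref{lemma2}.
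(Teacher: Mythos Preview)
Your approach is essentially the paper's own: it, too, reduces the theorem to citing Corollary \ref{prop3}, Proposition \ref{propoiso}, and Lemma \ref{lemma2} in sequence, with the level-$1$ case dispatched as trivial. The one ingredient you omit is the justification that levels $1$ and $2$ exhaust all possibilities: the paper invokes Proposition \ref{dimoabel} (to guarantee $X$ is multipermutational at all) together with \cite[Theorem 3.7]{etingof1998set} (which bounds $mpl(X)$ by the number of prime factors of $|X|$, so here $mpl(X)\le 2$). Without that bound, your case split is not a priori exhaustive, and the total count of $p$ would not follow; once you add this citation, your argument and the paper's coincide.
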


\begin{proof}
By Proposition \ref{dimoabel} and \cite[Theorem 3.7]{etingof1998set}, every indecomposable cycle set of size $p^2$ and cyclic permutation group is multipermutational of level at most $2$. Then the thesis follows by the previous proposition, Corollary \ref{prop3} and Lemma \ref{lemma2}.
\end{proof}

In the rest of this section we will show that the only indecomposable cycle set of cardinality $pq$ (where $p$ and $q$ are distinct prime numbers) having cyclic permutation group are those having level $1$. At first we exhibit a preliminary lemma.

\begin{lemma}\label{lemma1}
Let $X$ be a finite indecomposable cycle set of multipermutational level $2$ and with permutation group $\mathcal{G}(X)=<\alpha>$. If $\alpha^a$ and $\alpha^b$ are elements of $\sigma(X)$, then $a\equiv b\; (mod\; |\sigma(X)|)$.
\end{lemma}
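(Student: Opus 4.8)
The plan is to exploit the fact that the hypothesis $mpl(X)=2$ forces the retraction $\sigma(X)$ to have level $1$, which collapses all of its left multiplications into a single permutation, and then to read off what this collapse means for the exponents of the $\sigma_x$.

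First I would set up coordinates. Since $X$ is indecomposable, $\mathcal{G}(X)=\langle\alpha\rangle$ acts transitively; being a transitive abelian (cyclic) group it acts regularly, so $|\langle\alpha\rangle|=|X|=:N$ and $\alpha$ is an $N$-cycle. Identifying $X$ with $\mathbb{Z}/N\mathbb{Z}$ so that $\alpha$ is the shift $x\mapsto x+1$, I write $\sigma_x=\alpha^{j_x}$ with $j_x\in\mathbb{Z}/N\mathbb{Z}$, so that $\sigma_x(y)=y+j_x$. Put $m:=|\sigma(X)|$. By Lemma \ref{retrgen} the assignment $x\mapsto\sigma_x$ has minimal period $m$, i.e. $\sigma_{x+m}=\sigma_x$ for all $x$; minimality forces $m\mid N$, and $j_x\bmod m$ depends only on $x\bmod m$. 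Thus $\sigma(X)$ is naturally identified with $\mathbb{Z}/m\mathbb{Z}$ via $\sigma_x\leftrightarrow\overline{x}$, and its permutation group is generated by the induced $m$-cycle.

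Next I would invoke $mpl(X)=2$, which says precisely that the retraction $\sigma(X)$ is a cycle set of level $1$, that is, $|\sigma(\sigma(X))|=1$: all left multiplications of $\sigma(X)$ coincide. Using the induced operation $\sigma_x\cdot\sigma_y=\sigma_{x\cdot y}=\sigma_{\sigma_x(y)}$ from the preliminaries, the left multiplication of $\sigma(X)$ indexed by $\sigma_x$ sends $\sigma_y\mapsto\sigma_{\sigma_x(y)}=\sigma_{y+j_x}$, which under the identification $\sigma(X)\cong\mathbb{Z}/m\mathbb{Z}$ is exactly the translation $\overline{y}\mapsto\overline{y}+(j_x\bmod m)$. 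Two translations of $\mathbb{Z}/m\mathbb{Z}$ agree if and only if their shifts are congruent modulo $m$, so the equality of all these left multiplications is equivalent to $j_x\equiv j_{x'}\pmod{m}$ for all $x,x'\in X$; in other words, $j_x\bmod m$ is a constant $c$ independent of $x$.

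Finally I would conclude. If $\alpha^a,\alpha^b\in\sigma(X)$, then $\alpha^a=\sigma_x$ and $\alpha^b=\sigma_y$ for some $x,y\in X$, whence $a\equiv j_x\pmod{N}$ and $b\equiv j_y\pmod{N}$. Reducing modulo $m$ (legitimate since $m\mid N$) and using the constancy just established gives $a\equiv j_x\equiv c\equiv j_y\equiv b\pmod{m}$, which is the assertion. The only genuinely substantive step is the middle one: translating the level-$1$ condition on $\sigma(X)$ into the arithmetic statement that $j_x$ is constant modulo $|\sigma(X)|$. The remaining points — the regularity giving $|\mathcal{G}(X)|=|X|$, and the divisibility $m\mid N$ ensuring compatibility of reduction modulo $m$ with reduction modulo $N$ — are routine bookkeeping.
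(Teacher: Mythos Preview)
Your proof is correct and follows essentially the same route as the paper: both arguments use $mpl(X)=2$ to conclude that all left multiplications on $\sigma(X)$ coincide, translate this into $\sigma_{\alpha^a(z)}=\sigma_{\alpha^b(z)}$ for all $z$ (in your language, $j_x\bmod m$ is constant), and then invoke Lemma~\ref{retrgen} to get the congruence modulo $|\sigma(X)|$. The only difference is presentational --- you set up explicit $\mathbb{Z}/N\mathbb{Z}$-coordinates, whereas the paper works directly with $\alpha^a,\alpha^b$ and substitutes $z\mapsto\alpha^{-a}(z)$ to reach $\sigma_z=\sigma_{\alpha^{b-a}(z)}$.
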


\begin{proof}
Let $\alpha^a,\alpha^b\in \sigma(X)$ and $x_1,x_2\in X$ such that $\sigma_{x_1}:=\alpha^a$ and $\sigma_{x_2}:=\alpha^b$. Then $\sigma_{x_1}\cdot \sigma_z=\sigma_{x_2}\cdot \sigma_z$ for every $z\in X$.
So, $\sigma_{\alpha^a(z)}=\sigma_{\alpha^b(z)}$ for every $z\in X$ and hence $\sigma_{\alpha^a(\alpha^{-a}(z))}=\sigma_{\alpha^b(\alpha^{-a}(z))}$. Therefore $\sigma_{z}=\sigma_{\alpha^{b-a}(z)}$ for every $z\in X$, hence by Lemma \ref{retrgen} $a\equiv b\; (mod\; |\sigma(X)|)$.
\end{proof}

\begin{prop}\label{pdivq}
Let $X$ be a finite indecomposable cycle set of cardinality $pq$ and with cyclic permutation group $\mathcal{G}(X)=<\alpha>$, where $p$ and $q$ are prime numbers with $p<q$. Then $mpl(X)=1$.
\end{prop}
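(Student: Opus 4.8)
The plan is to argue by contradiction, assuming $mpl(X)\ge 2$. Since $X$ is indecomposable, $\mathcal{G}(X)=\langle\alpha\rangle$ acts transitively on $X$, and being abelian it acts regularly; hence $\alpha$ has order $pq$ and I may identify $X$ with $\mathbb{Z}/pq\mathbb{Z}$ so that $\alpha(y)=y+1$ and $\sigma_x=\alpha^{c_x}$, i.e. $\sigma_x(y)=y+c_x$, for suitable integers $c_x$. By Proposition \ref{dimoabel} $X$ is multipermutational and $\sigma(X)$ is an indecomposable cycle set; writing $N:=|\sigma(X)|$, the set $\{\,n : \sigma_{\alpha^n(x)}=\sigma_x\,\}$ is a subgroup of $\mathbb{Z}$, equal by Lemma \ref{retrgen} to $N\mathbb{Z}$ and containing $pq$, so $N\mid pq$; together with $1<N<pq$ this forces $N\in\{p,q\}$. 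As $\sigma(X)$ is indecomposable of prime order, it is the unique such cycle set of \cite{etingof1998set} and has level $1$; hence $mpl(X)=2$, so Lemma \ref{lemma1} applies and gives a constant residue $\bar c$ with $c_x\equiv\bar c\pmod N$ for every $x$.

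First I would dispose of the case $N=q$. There are only $p$ residues modulo $pq$ congruent to $\bar c$ modulo $q$, so the $N=q$ distinct permutations in $\sigma(X)=\{\alpha^{c_x}\}$ would occupy at most $p$ distinct exponents modulo $pq$; since $q>p$ this is impossible.

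The substance is the case $N=p$. Here Lemma \ref{retrgen} gives $\sigma_{x+p}=\sigma_x$, so $c_x$ depends only on $x\bmod p$ and defines an \emph{injective} map $c\colon\mathbb{Z}/p\mathbb{Z}\to\mathbb{Z}/pq\mathbb{Z}$ (its image is the $p$-element exponent set of $\sigma(X)$); moreover $c_x\equiv\bar c\pmod p$ for all $x$, and $\bar c\neq 0$, since otherwise every $c_x$ would be divisible by $p$ and $\sigma(X)$ could not generate the whole of $\langle\alpha\rangle$. Feeding $\sigma_x(y)=y+c_x$ into the cycle-set identity $\sigma_{\sigma_x(y)}\sigma_x=\sigma_{\sigma_y(x)}\sigma_y$ and comparing exponents modulo $pq$ yields $c(\bar y+\bar c)+c(\bar x)\equiv c(\bar x+\bar c)+c(\bar y)$, whence $K:=c(\bar y+\bar c)-c(\bar y)$ is independent of $\bar y$. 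Because $\bar c\neq 0$ generates $\mathbb{Z}/p\mathbb{Z}$, the values $c(0),c(\bar c),c(2\bar c),\dots$ form an arithmetic progression with common difference $K$ running through the whole image of $c$; injectivity of $c$ together with $pK=c(p\bar c)-c(0)=0$ forces $K$ to have additive order exactly $p$ in $\mathbb{Z}/pq\mathbb{Z}$, so $p\nmid K$. On the other hand $K=c(\bar c)-c(0)\equiv\bar c-\bar c=0\pmod p$, a contradiction.

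Thus $mpl(X)=2$ is impossible, and since by Proposition \ref{dimoabel} the cycle set $X$ is multipermutational, its level must be $1$. I expect the case $N=p$ to be the main obstacle: it is the only place where the cycle-set equation must genuinely be exploited, and the delicate point is extracting the arithmetic-progression constraint and then playing the order of $K$ modulo $p$ against its order modulo $q$; the case $N=q$ collapses to a trivial counting argument via Lemma \ref{lemma1}.
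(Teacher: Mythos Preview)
Your proof is correct and follows essentially the same approach as the paper: both use Lemma~\ref{lemma1} to force the exponents into a fixed residue class modulo $N$, dispose of $N=q$ by a counting argument, and in the case $N=p$ extract from the cycle-set identity an arithmetic-progression constraint that collides with the $p$-versus-$q$ divisibility. The only cosmetic difference is that the paper first normalizes so that some $\sigma_x$ equals $\alpha$ and works with the residues $t_i\in\{0,\dots,q-1\}$ modulo $q$, whereas you keep the unnormalized constant $\bar c$ and argue directly with $K\in\mathbb{Z}/pq\mathbb{Z}$; the resulting contradictions ($q\mid kp$ in the paper, $p\mid K$ while $K$ has order $p$ in your version) are two phrasings of the same arithmetic obstruction.
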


\begin{proof}
By Proposition \ref{dimoabel}, $X$ is a multipermutational cycle set and, by \cite[Theorem 3.7]{etingof1998set}, $mpl(X)\leq 2$ and $|\sigma(X)|\in \{p,q,1\}$.\\
If $mpl(X)=2$ and $|\sigma(X)|=q$, by Lemma \ref{lemma1} there exists $a\in \{1,\ldots pq-1\}$ such that $\sigma(X)\subseteq \{\alpha^{a+qt}| t\in \mathbb{N}\}$. Hence $|\sigma(X)|<q$, a contradiction.\\
If $|\sigma(X)|=p$ by Lemma \ref{lemma1} there exists $a\in \{1,\ldots pq-1\}$ such that $\sigma(X)$ is contained in $\{\alpha^{a+pt}| t\in \mathbb{N}\}$. Now, observe that $p\nmid a$, otherwise the permutation group $\mathcal{G}(X)$ is contained in $<\alpha^p> $, in contradiction with $|\mathcal{G}(X)|=pq$. Moreover, this implies that $id_X=\alpha^0\notin \sigma(X)$ and therefore $|<\alpha^q>\cap \sigma(X)|<p $. Let $x\in X$ and $t'\in \{1,...,q-1\}$ be such that $\sigma_x=\alpha^{a+pt'}$ and $q\nmid a+pt'$. Since $p\nmid a+pt'$ we obtain that $gcd(pq,a+pt')=1$ and hence $\mathcal{G}(X)=<\sigma_x>$. So, without loss of generality we can suppose that $\alpha=\sigma_x$.\\
Now, put $x_0:=x$ and $x_i:=\alpha^i(x)$ for every $i\in \{1,....,pq-1\}$. By Lemma \ref{retrgen} we have that $\sigma(X)=\{\sigma_{x_0},...,\sigma_{x_{p-1}}\}$, hence by \cite[Theorem 2.12]{etingof1998set}, it follows that $\sigma_{x_i}\cdotp \sigma_{x_j}=\sigma_{x_0}\cdotp \sigma_{x_j}=\sigma_{x_{j+1}}$ for every $x_i,x_j\in X$ (where $j+1$ is considered module $p$)\\
Hence, for every $i\in \{0,....,pq-1\}$ there exist $t_i\in \{0,...,q-1 \}$ such that $\sigma_{x_i}=\alpha^{1+pt_i}$, $t_i\neq t_{i+j}$ for every $j\in\{1,...,p-1\}$ and, by Lemma \ref{retrgen}, $t_i=t_{i+p}$. Now, since $\sigma_{x_i\cdotp x_j}\sigma_{x_i}=\sigma_{x_j\cdotp x_i}\sigma_{x_j}$ for every $i,j\in \{0,...,pq-1\}$ we have that 
$$2+pt_{j+1}+pt_i \equiv 2+pt_{i+1}+pt_j \;(mod\; pq)$$
which implies
$$t_{j+1}+t_i\equiv t_{i+1}+t_j\;(mod\; q)$$
for every $i,j\in \{0,...,pq-1\}$. Since $t_0=0$, we obtain that there exists $k\in \{1,...,q-1\}$ such that $t_i=\bar{ki}$, where $\bar{ki} $ is the rest of $ki$ in the division by $q$, for every $i\in\{1,...,p-1\}$. In this way, $t_p=t_0=0$ and 
$$-k(p-1)\equiv t_p-t_{p-1} \equiv t_1\equiv k\; (mod\; q),$$ 
hence $q\mid kp$, a contradiction. Therefore the thesis follows.
\end{proof}

\section{Indecomposable cycle sets of cardinality $pq$ and abelian permutation group}

In this section we focus on indecomposable cycle sets with a permutation group that is abelian non-cyclic. In the main result we will show that there is, up to isomorphism, only one indecomposable cycle set of size $p^2$, multipermutational level equal to $2$ and with abelian non-cyclic permutation group. This fact allows us to give a complete classification of indecomposable cycle sets of cardinality $pq$ and abelian permutation group.\\

At first, we show a construction of indecomposable cycle set of size $p^2$.

\begin{prop}
Let $(\mathbb{Z}/p\mathbb{Z}\times \mathbb{Z}/p\mathbb{Z},\cdotp_{\alpha})$ be the algebraic structure given by 
$$(a,i)\cdotp_{\alpha} (b,j):=(b+1,\alpha^{a}(j))$$
for all $(a,i),(b,j)\in \mathbb{Z}/p\mathbb{Z}\times \mathbb{Z}/p\mathbb{Z}$, where $\alpha$ is a $p-$cycle of $Sym(\mathbb{Z}/p\mathbb{Z})$. Then $(\mathbb{Z}/p\mathbb{Z}\times \mathbb{Z}/p\mathbb{Z},\cdotp_{\alpha})$ is an indecomposable cycle set having permutation group isomorphic to $\mathbb{Z}/p\mathbb{Z}\times \mathbb{Z}/p\mathbb{Z}$. Moreover, if $\alpha'$ is an other $p-$cycle of $Sym(\mathbb{Z}/p\mathbb{Z})$, we have that $(\mathbb{Z}/p\mathbb{Z}\times \mathbb{Z}/p\mathbb{Z},\cdotp_{\alpha})\cong(\mathbb{Z}/p\mathbb{Z}\times \mathbb{Z}/p\mathbb{Z},\cdotp_{\alpha'})$
\end{prop}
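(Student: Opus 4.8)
The plan is to verify the cycle set axioms by direct computation, then to identify the permutation group explicitly (which simultaneously yields indecomposability), and finally to produce the isomorphism from the conjugacy of $p$-cycles.

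First I would check that each left multiplication $\sigma_{(a,i)}\colon (b,j)\mapsto (b+1,\alpha^{a}(j))$ is bijective, its inverse being $(b,j)\mapsto (b-1,\alpha^{-a}(j))$. For the cycle set identity I would evaluate both sides on $x=(a,i)$, $y=(b,j)$, $z=(c,k)$. Since $x\cdotp_\alpha y=(b+1,\alpha^{a}(j))$ and $x\cdotp_\alpha z=(c+1,\alpha^{a}(k))$, the left-hand side $(x\cdotp_\alpha y)\cdotp_\alpha(x\cdotp_\alpha z)$ equals $(c+2,\alpha^{b+1}\alpha^{a}(k))=(c+2,\alpha^{a+b+1}(k))$; an analogous computation gives that the right-hand side $(y\cdotp_\alpha x)\cdotp_\alpha(y\cdotp_\alpha z)$ equals $(c+2,\alpha^{a+1}\alpha^{b}(k))=(c+2,\alpha^{a+b+1}(k))$. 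The two agree, so \eqref{cicloide} holds and $(\mathbb{Z}/p\mathbb{Z}\times\mathbb{Z}/p\mathbb{Z},\cdotp_\alpha)$ is a cycle set.

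Next I would compute $\mathcal{G}(X)$. The crucial observation is that $\sigma_{(a,i)}$ depends only on $a$, so writing $\tau_a:=\sigma_{(a,i)}$, a composition of such maps sends $(b,j)\mapsto(b+n,\alpha^{m}(j))$, where $n$ is the number of factors and $m$ the sum of their first coordinates. Because $\alpha^{p}=id$, the collection of all maps $(b,j)\mapsto(b+n,\alpha^{m}(j))$ with $n,m\in\mathbb{Z}/p\mathbb{Z}$ is a group of order $p^{2}$ isomorphic to $\mathbb{Z}/p\mathbb{Z}\times\mathbb{Z}/p\mathbb{Z}$, in which $\tau_a$ corresponds to $(1,a)$. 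Since $\tau_0\leftrightarrow(1,0)$ and $\tau_1\leftrightarrow(1,1)$ already generate this group, we obtain $\mathcal{G}(X)\cong\mathbb{Z}/p\mathbb{Z}\times\mathbb{Z}/p\mathbb{Z}$. This group also acts transitively on $X$: given $(b,j)$ and $(b',j')$, take $n=b'-b$ and $m$ with $\alpha^{m}(j)=j'$, which exists because the $p$-cycle $\alpha$ acts transitively. By \cite[Prop. 2.12]{etingof1998set}, transitivity of $\mathcal{G}(X)$ is equivalent to indecomposability, so $X$ is indecomposable.

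Finally, for the isomorphism claim I would use that any two $p$-cycles of $Sym(\mathbb{Z}/p\mathbb{Z})$ are conjugate: choose $\gamma\in Sym(\mathbb{Z}/p\mathbb{Z})$ with $\alpha'=\gamma\alpha\gamma^{-1}$, equivalently $\gamma\alpha^{a}=(\alpha')^{a}\gamma$ for all $a$. Define the bijection $F(a,i):=(a,\gamma(i))$. Then $F\big((a,i)\cdotp_\alpha(b,j)\big)=F(b+1,\alpha^{a}(j))=(b+1,\gamma\alpha^{a}(j))$, whereas $F(a,i)\cdotp_{\alpha'}F(b,j)=(b+1,(\alpha')^{a}\gamma(j))$, and these coincide precisely because $\gamma\alpha^{a}=(\alpha')^{a}\gamma$. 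Hence $F$ is a cycle set isomorphism. The whole argument is a chain of routine verifications; the only delicate points are the exponent bookkeeping in the cycle identity (both sides must collapse to $\alpha^{a+b+1}$) and the fact that the conjugating permutation $\gamma$ acts only on the second coordinate, leaving the shift $b\mapsto b+1$ in the first coordinate intact, which is exactly what makes $F$ compatible with $\cdotp_{\alpha'}$.
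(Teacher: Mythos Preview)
Your proof is correct and follows essentially the same approach as the paper: the paper dismisses the cycle set, permutation group, and indecomposability verifications as ``a long but easy calculation'' (which you carry out explicitly and correctly), and for the isomorphism uses exactly your conjugacy argument, taking $f\in Sym(\mathbb{Z}/p\mathbb{Z})$ with $\alpha=f^{-1}\alpha'f$ and defining $\psi(a,i)=(a,f(i))$, which is your $F$ with $f=\gamma$.
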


\begin{proof}
Verifying that $(\mathbb{Z}/p\mathbb{Z}\times \mathbb{Z}/p\mathbb{Z},\cdotp_{\alpha})$ is an indecomposable  cycle set having the permutation group isomorphic to $\mathbb{Z}/p\mathbb{Z}\times \mathbb{Z}/p\mathbb{Z}$ is a long but easy calculation.\\
Now, let $\alpha'$ be a $p-$cycle of $Sym(\mathbb{Z}/p\mathbb{Z})$. Then there exist an element $f\in Sym(\mathbb{Z}/p\mathbb{Z})$ such that $\alpha=f^{-1}\alpha'f$.\\
Let $\psi:(\mathbb{Z}/p\mathbb{Z}\times \mathbb{Z}/p\mathbb{Z},\cdotp_{\alpha}) \to (\mathbb{Z}/p\mathbb{Z}\times \mathbb{Z}/p\mathbb{Z},\cdotp_{\alpha'})$ be given by
$$(a,i)\mapsto (a,f(i))$$
for every $(a,i)\in (\mathbb{Z}/p\mathbb{Z}\times \mathbb{Z}/p\mathbb{Z},\cdotp_{\alpha})$. Then 
$$\psi((a,i)\cdotp (b,j))=\psi(b+1,\alpha^a(j))=(b+1,f(\alpha^a(j)))=$$
$$(b+1,\alpha'^a(f(j)))=(a,f(i))\cdotp (b,f(j))=\psi(a,i)\cdotp \psi(b,j) $$
for all $(a,i),(b,j)\in \mathbb{Z}/p\mathbb{Z}\times \mathbb{Z}/p\mathbb{Z}$. Since $\psi$ is bijective, the thesis follows.
\end{proof}

In the following result we will prove that, up to isomorphism, the unique indecomposable cycle set of size $p^2$ and permutation group isomorphic to $\mathbb{Z}/p\mathbb{Z}\times \mathbb{Z}/p\mathbb{Z}$ is the one obtained in the previous proposition.

\begin{prop}\label{uniqueinde}
Let $I$ be an indecomposable cycle set of size $p^2$, level $2$ and with permutation group $\mathcal{G}(I)$ isomorphic to $\mathbb{Z}/p\mathbb{Z}\times \mathbb{Z}/p\mathbb{Z}$. Then $I$ is isomorphic to the cycle set $(\mathbb{Z}/p\mathbb{Z}\times \mathbb{Z}/p\mathbb{Z},\cdotp_{\alpha})$  of the previous proposition.
\end{prop}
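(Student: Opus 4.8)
The plan is to reconstruct $I$ explicitly from its retraction together with the regular action of $\mathcal{G}(I)$. First I would analyse $\sigma(I)$. Since $I$ has level $2$, the retraction $\sigma(I)$ has level $1$ and, by Proposition \ref{dimoabel}, it is indecomposable; hence all its left multiplications coincide with one permutation, which by transitivity is a full-length cycle $\tau$ on $\sigma(I)$. The retraction epimorphism $x\mapsto\sigma_x$ induces a surjection of $\mathcal{G}(I)\cong\mathbb{Z}/p\mathbb{Z}\times\mathbb{Z}/p\mathbb{Z}$ onto the cyclic group $\mathcal{G}(\sigma(I))$, which is therefore nontrivial and cyclic, i.e. $\cong\mathbb{Z}/p\mathbb{Z}$; as a transitive abelian group is regular, this forces $|\sigma(I)|=p$. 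Writing $\pi\colon I\to\sigma(I)\cong\mathbb{Z}/p\mathbb{Z}$ for the retraction and normalising $\tau$ to be the shift $b\mapsto b+1$, I record two facts that drive the computation: $\sigma_x$ depends only on $\pi(x)$, and $\pi(x\cdot y)=\pi(y)+1$ for all $x,y$.

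Next I would coordinatise $I$. Since $\mathcal{G}(I)$ is abelian and transitive it acts regularly, so $|\mathcal{G}(I)|=|I|=p^2$. Let $N$ be the kernel of the induced surjection onto $\mathcal{G}(\sigma(I))$; it consists of the fibre-preserving permutations, has order $p$, and—being a subgroup of a regular action—acts simply transitively on each fibre of $\pi$. Fix $x_0\in I$ and set $\tau_0:=\sigma_{x_0}$; because $\mathcal{G}(I)$ is elementary abelian we have $\tau_0^{\,p}=\mathrm{id}$, while $\tau_0\notin N$ gives $\langle\tau_0\rangle\cap N=1$, so $\mathcal{G}(I)=\langle\tau_0\rangle\times N$ with $N=\langle\nu\rangle\cong\mathbb{Z}/p\mathbb{Z}$. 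Transporting the regular action to coordinates via $(b,j)\mapsto\tau_0^{\,b}\nu^{\,j}(x_0)$ identifies $I$ with $\mathbb{Z}/p\mathbb{Z}\times\mathbb{Z}/p\mathbb{Z}$ so that the first coordinate is exactly the fibre index $\pi$.

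Then I would compute the operation in these coordinates. Since $\sigma_x$ depends only on $\pi(x)$ and shifts the fibre index by $1$, every left multiplication has the form $\sigma_{(a,i)}=\tau_0\,\nu^{\,h(a)}$ for a single function $h\colon\mathbb{Z}/p\mathbb{Z}\to\mathbb{Z}/p\mathbb{Z}$; one checks $h(0)=0$ and that $h$ is a bijection, because the $p$ distinct fibres yield the $p$ distinct elements of $\sigma(I)$. A direct evaluation then gives
\begin{equation*}
(a,i)\cdot(b,j)=(b+1,\,j+h(a)).
\end{equation*}
Imposing the defining cycle set identity on three arbitrary elements collapses, after cancellation of the terms that drop out, to the single congruence $h(a)+h(b+1)\equiv h(b)+h(a+1)\pmod p$, which is precisely condition \eqref{eq} of Lemma \ref{lemma2}. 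Hence $h$ is of the form $h(a)=\overline{ta}$ with $t\neq0$, so the operation becomes $(a,i)\cdot(b,j)=(b+1,\,j+ta)=(b+1,\alpha^{a}(j))$ with $\alpha$ the $p$-cycle $j\mapsto j+t$. Thus $I\cong(\mathbb{Z}/p\mathbb{Z}\times\mathbb{Z}/p\mathbb{Z},\cdotp_{\alpha})$, and the previous proposition (independence of the isomorphism class from the chosen $p$-cycle) yields the claim.

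The main obstacle I anticipate is the bookkeeping in the second step: arranging the coordinates so that the fibre index, the action of $N$, and the normalisation of $\tau$ are mutually compatible, so that the cycle set axiom really does reduce to the clean two-variable congruence \eqref{eq}. Once the coordinates are set up correctly, recognising that congruence as the hypothesis of Lemma \ref{lemma2} is the decisive simplification; establishing the surjection $\mathcal{G}(I)\twoheadrightarrow\mathcal{G}(\sigma(I))$ and the splitting $\mathcal{G}(I)=\langle\tau_0\rangle\times N$ (which uses that the group is elementary abelian) is the other point that requires care.
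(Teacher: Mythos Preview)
Your argument is correct and follows the same overall strategy as the paper: split $\mathcal{G}(I)$ as a product of a left multiplication and the fibre-stabilising subgroup, coordinatise $I$ accordingly, and reduce the cycle-set identity to the additive congruence that forces the structure function to be linear. The paper reaches the same congruence (its equation~\eqref{equu}) and then concludes as you do.

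Where you differ is in the packaging of the first half. The paper invokes the dynamical-extension description $I\cong X\times_\beta S$ from \cite{vendramin2016extensions,cacsp2018}, takes $H_x$ as the stabiliser of a block $\{x\}\times S$, and then carries out an explicit computation in cycle notation to show, via $\gamma\eta\gamma^{-1}=\eta$, that the generator $\gamma$ of $H_x$ acts by the \emph{same} $p$-cycle on every fibre. You bypass all of this by working directly with the regular action: your $N$ is the kernel of the canonical surjection $\mathcal{G}(I)\twoheadrightarrow\mathcal{G}(\sigma(I))$, and the coordinate map $(b,j)\mapsto\tau_0^{\,b}\nu^{\,j}(x_0)$ makes the ``$\gamma$ acts uniformly'' fact automatic (it is just commutativity of the group in its regular representation). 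Your route is shorter and more conceptual; the paper's route is more hands-on but requires no appeal to the surjection $\mathcal{G}(I)\to\mathcal{G}(\sigma(I))$ being well defined. Both land on the same congruence, and you make the final step slightly more explicit by quoting Lemma~\ref{lemma2} rather than re-deriving the linear form of~$h$.
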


\begin{proof}
By \cite[Theorem 2.12]{vendramin2016extensions} and \cite[Proposition 2]{cacsp2018}, $I$ is isomorphic to a dynamical extension on $X\times_{\beta} S$, where $X:=\{x_1,...,x_p\}$ is a cycle set isomorphic to $\sigma(I)$, $S:=\{s_1,...,s_p\}$ and $\beta$ is a dynamical cocycle. Without loss of generality, by \cite[Theorem 2.12]{etingof1998set} we can suppose that $x_i\cdotp x_j=x_{\theta(j)}$ for every $x_i,x_j\in X$, where $\theta$ is the permutation of $Sym(\{1,...,p\})$ given by $\theta:=(1,...,p)$.\\
If $x\in X$, by \cite[Lemma 6]{cacsp2018} the subgroup $H_x=\{h|h\in \mathcal{G}(I), h(\{x\}\times S)\subseteq \{x\}\times S\}$ is transitive on $\{x\}\times S$ and since $\mathcal{G}(I)$ is abelian, by \cite[Lemma 5]{cacsp2018} we have that $H_x$ is transitive on $\{y\}\times S$ for every $y\in X$. Moreover, since $|\mathcal{G}(I)|=p^2$, necessarily $|H_x|=p$, hence $H_x$ is generated by a permutation $\gamma$ transitive on the subset $L_y:=\{(y,s)|s\in S\}$ for every $y\in X$, therefore 
$$\gamma:=((x_1,s_{k_{1,1}})...(x_1,s_{k_{1,p}}))...((x_p,s_{k_{p,1}})...(x_p,s_{k_{p,p}})).$$
Now, let $\eta$ be a left multiplication of $X\times_{\beta} S$. Then $\eta$ has order $p$ and, by the definition of the left multiplications in $X$, it acts as a $p$-cycle on the set $\Delta:=\{\{x\}\times S\mid x\in S\}$, hence 
$$\eta=((x_{1},s_{l_{1,1}})...(x_{p},s_{l_{1,p}}))...((x_{1},s_{l_{p,1}})...(x_{p},s_{l_{p,p}})).$$
It is not restrictive (after renaming the elements of $S$) supposing that $l_{i,j}=i$ for every $i,j\in\{1,...,p\}$, hence
$$\eta=((x_1,s_1)...(x_p,s_{1}))...((x_1,s_{p})...(x_p,s_{p})).$$
Now, let $\bar{k}_j\in Sym(\{1,...,p\})$ be given by $\bar{k}_j=(k_{j,1}...k_{j,p})$ for every $j\in \{1,...,p\}$. Then
$$\gamma \eta \gamma^{-1}= ((x_1,s_{\bar{k}_1(1)})...(x_p,s_{\bar{k}_p(1)}))...((x_1,s_{\bar{k}_1(p)})...(x_p,s_{\bar{k}_p(p)}))$$
and, since $\mathcal{G}(I)$ is abelian necessarily $\gamma \eta \gamma^{-1}=\eta$, therefore $\bar{k}_{j_1}=\bar{k}_{j_2}$ for every $j_1,j_2\in \{1,...,p\}$. So
$$\gamma:=((x_1,s_{k_{1,1}})...(x_1,s_{k_{1,p}}))...((x_p,s_{k_{1,1}})...(x_p,s_{k_{1,p}}))$$
and $\mathcal{G}(I)=<\eta>\times H_x=<\eta>\times <\gamma>$.\\
In this case, every left multiplication of the cycle set is in the form $\eta^t\gamma^u$ for some $t,u\in \{0,...,p-1\}$. By the definition of the left multiplication on $X$ we have that $\sigma_{(x_i,s_i)}=\eta\gamma^{n_i}$ for some $n_i\in \{0,...,p-1\}$ for every $i\in\{1,...,p\}$. We have to determine the numbers $n_i$.
Now, since $X\times S$ is a cycle set, equality (\ref{cicloide}) holds, and this implies that
\begin{equation}\label{equu}
n_{j+1}-n_j\equiv n_{i+1}-n_i \quad (mod\;p)
\end{equation}
for every $i,j\in \{1,...,p\}$ (where $i+1$ and $j+1$ are considered module $p$).\\
Since $|<\gamma>|=|X|=|\sigma(X)|=p$, there exist a unique $q\in \{1,...,p\}$ such that $\sigma_{(x_q,s)}=\eta\gamma^0=\eta$. After translating the variables $\{x_i\}_{i\in\{1,...,p\}}$, we can suppose that $q=p$. Therefore if $\sigma_{(x_1,s)}=\eta\gamma^{n_1} $ for some $n_1\in\{1,...,p-1 \}$, we obtain that 
$$\sigma_{(x_v,s_w)}=\eta\gamma^{n_1 v}$$
for every $v,w\in\{1,...,p\}$, hence the thesis.

\end{proof}

Finally, we can give the classification of indecomposable cycle sets of order $pq$ and with abelian permutation group.

\begin{theor}
Let $p$ and $q$ be prime numbers not necessarily distinct. Then the indecomposable cycle sets of order $pq$ and with abelian permutation group are, up to isomorphism, the following:
\begin{itemize}
\item[$\bullet$] Case $p\neq q$
\begin{itemize}
\item[a)] $(\mathbb{Z}/pq\mathbb{Z},\cdotp)$ where $i\cdotp j=j+1$ for all $i,j\in \mathbb{Z}/pq\mathbb{Z}$. In this case, $\mathcal{G}(\mathbb{Z}/pq\mathbb{Z})\cong (\mathbb{Z}/pq\mathbb{Z},+)$.
\end{itemize}
\item[$\bullet$] Case $p=q$ 
\begin{itemize}
\item[a)] $(\mathbb{Z}/p^2\mathbb{Z},\cdotp)$ where $i\cdotp j=j+1$ for all $i,j\in \mathbb{Z}/p^2\mathbb{Z}$. In this case, $\mathcal{G}(\mathbb{Z}/p^2\mathbb{Z})\cong (\mathbb{Z}/p^2\mathbb{Z},+)$.
\item[b)] $(\mathbb{Z}/p^2\mathbb{Z},\cdotp_f)$ where $i\cdotp j=j+1+pf(i)$ for all $i,j\in \mathbb{Z}/p^2\mathbb{Z}$ and $f:\mathbb{Z}/p\mathbb{Z}\longrightarrow \{0,...,p-1\}$ is a function as in Lemma \ref{lemma2}. In this case, $\mathcal{G}(\mathbb{Z}/p^2\mathbb{Z})\cong (\mathbb{Z}/p^2\mathbb{Z},+)$. Moreover, there are $p-1$ non-isomorphic cycle sets of this type.
\item[c)] $(\mathbb{Z}/p\mathbb{Z}\times \mathbb{Z}/p\mathbb{Z},\cdotp)$ where $(i,s)\cdotp (j,t)=(j+1,t+i)$ for all $i,j,s,t\in \mathbb{Z}/p\mathbb{Z}$. In this case, $\mathcal{G}(\mathbb{Z}/p\mathbb{Z}\times \mathbb{Z}/p\mathbb{Z})\cong (\mathbb{Z}/p\mathbb{Z}\times \mathbb{Z}/p\mathbb{Z},+)$.
\end{itemize}
\end{itemize}
\end{theor}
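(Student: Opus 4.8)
The plan is to assemble the theorem from the structural results established above, organising the argument according to the possible isomorphism types of the abelian group $\mathcal{G}(X)$. First I would record that, by Proposition~\ref{dimoabel} and Proposition~\ref{abeldim}, every indecomposable finite cycle set $X$ of order $pq$ with abelian permutation group is automatically non-degenerate and multipermutational, so no extra hypothesis is needed. Since $X$ is indecomposable, $\mathcal{G}(X)$ acts transitively on $X$; being abelian and transitive it acts regularly, whence $|\mathcal{G}(X)|=|X|=pq$. The abelian groups of order $pq$ are exactly $\mathbb{Z}/pq\mathbb{Z}$ when $p\neq q$, and $\mathbb{Z}/p^2\mathbb{Z}$ together with $\mathbb{Z}/p\mathbb{Z}\times\mathbb{Z}/p\mathbb{Z}$ when $p=q$. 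This yields a clean case split into the cyclic case and, only for $p=q$, the non-cyclic case.

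Next I would treat the cyclic case. When $p\neq q$ the unique abelian group of order $pq$ is cyclic, so $\mathcal{G}(X)\cong\mathbb{Z}/pq\mathbb{Z}$, and Proposition~\ref{pdivq} (after ordering $p<q$) forces $mpl(X)=1$. A level-$1$ cycle set has all left multiplications equal to a single permutation $\alpha$; since $\langle\alpha\rangle$ is transitive and regular of order $pq$, $\alpha$ is a $pq$-cycle, and after identifying $X$ with $\mathbb{Z}/pq\mathbb{Z}$ so that $\alpha(y)=y+1$ we obtain $i\cdot j=\sigma_i(j)=j+1$, which is case~a). When $p=q$ and $\mathcal{G}(X)$ is cyclic, I would simply invoke the classification of indecomposable cycle sets of size $p^2$ with cyclic permutation group proved above: there is one of level $1$, namely $i\cdot j=j+1$ (case~a), and exactly $p-1$ of level $2$, given by $i\cdot j=j+1+pf(i)$ with $f$ a map as in Lemma~\ref{lemma2}, pairwise non-isomorphic by Proposition~\ref{propoiso}, which is case~b).

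Finally I would handle the non-cyclic case, which occurs only when $p=q$ and $\mathcal{G}(X)\cong\mathbb{Z}/p\mathbb{Z}\times\mathbb{Z}/p\mathbb{Z}$. Here $mpl(X)\neq 1$, since a level-$1$ cycle set has cyclic permutation group; and by \cite[Theorem~3.7]{etingof1998set} the level of an indecomposable cycle set of size $p^2$ is at most $2$, so $mpl(X)=2$. Proposition~\ref{uniqueinde} then shows $X$ is isomorphic to the cycle set $(\mathbb{Z}/p\mathbb{Z}\times\mathbb{Z}/p\mathbb{Z},\cdotp_{\alpha})$ built in the preceding proposition; taking $\alpha$ to be the $p$-cycle $j\mapsto j+1$ and relabelling coordinates rewrites $(a,i)\cdotp_{\alpha}(b,j)=(b+1,\alpha^{a}(j))$ as $(i,s)\cdot(j,t)=(j+1,t+i)$, which is case~c). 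To conclude, I would check pairwise non-isomorphism: cases with cyclic $\mathcal{G}(X)$ cannot match the non-cyclic case~c), the level-$1$ family is separated from the level-$2$ families by multipermutational level, and within the cyclic level-$2$ family Proposition~\ref{propoiso} distinguishes the $p-1$ classes.

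The bulk of the difficulty is concentrated in the inputs I am allowed to cite, in particular the dynamical-extension analysis underlying Proposition~\ref{uniqueinde} and the size-$p^2$ cyclic classification, so the remaining work is mostly bookkeeping. The one genuinely delicate point in the assembly is the non-cyclic case: one must first rule out level $1$ and pin the level at exactly $2$ before Proposition~\ref{uniqueinde} applies, and then normalise the parameter $\alpha$ of the construction to recognise the resulting solution as case~c).
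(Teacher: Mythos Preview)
Your proposal is correct and follows essentially the same approach as the paper: determine $|\mathcal{G}(X)|=pq$ by regularity, split on the isomorphism type of the abelian group, and then invoke Proposition~\ref{pdivq}, the $p^2$ cyclic classification (Corollary~\ref{prop3} and Proposition~\ref{propoiso}), and Proposition~\ref{uniqueinde}. You are in fact slightly more careful than the paper's terse proof, explicitly ruling out level~$1$ in the non-cyclic case before applying Proposition~\ref{uniqueinde} and verifying pairwise non-isomorphism across the cases.
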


\begin{proof}
If an indecomposable cycle set $I$ has abelian permutation group $\mathcal{G}(I)$, by the transitivity of $\mathcal{G}(I)$ on $I$ it follows that $|\mathcal{G}(I)|=|I|$.\\
If $p\neq q$ then $\mathcal{G}(I)$ is isomorphic to $(\mathbb{Z}/pq\mathbb{Z},+)$ (the unique abelian group of size $pq$) while if $p=q$ we obtain that $\mathcal{G}(I)$ is isomorphic to $(\mathbb{Z}/p^2\mathbb{Z},+)$ or $(\mathbb{Z}/p\mathbb{Z}\times \mathbb{Z}/p\mathbb{Z},+)$.\\
Since by Proposition \ref{dimoabel} every indecomposable finite cycle set with abelian permutation group is multipermutational and by \cite[Theorem 3.7]{etingof1998set} an indecomposable cycle set of order $pq$ (where $p$ and $q$ are prime numbers not necessarily distinct) has multipermutational level at most $2$, the thesis follows by Corollary \ref{prop3}, Propositions \ref{propoiso}, \ref{pdivq} and \ref{uniqueinde}.
\end{proof}

Actually, by computer calculations, all the indecomposable cycle sets of size at most $8$ are known.
As final comment, referring to \cite[Problem $4$]{vendramin2018skew}, we remark that the previous result allows us to provide very easily all the indecomposable cycle sets having abelian permutation group and size $9$.

\section{Acknowledgments}

The authors would like to thank F. Catino for his comments and suggestions on this topic.

\section*{References}

\bibliographystyle{elsart-num-sort}
\bibliography{Bibliography}

\end{document}